\newcommand{\cyl}{(0,+\infty)\times\R^d}
\newcommand{\TUcyl}{(0,T)\times U}
\newcommand{\cTUcyl}{[0,T]\times\overline U}
\newcommand{\Tcyl}{(0,T)\times \R^d}
\newcommand{\cTcyl}{[0,T]\times\R^d}
\newcommand{\Qtau}{Q_\tau}
\newcommand{\QtauC}{\overline Q_\tau}
\newcommand{\D}[1]{\mbox{\rm #1}}
\newcommand{\I}{\mathcal I}
\newcommand{\R}{\mathbb R}
\newcommand{\N}{\mathbb N}
\newcommand{\Sol}{\mathcal S}
\newcommand{\Tr}{\D{tr}}
\newcommand{\Z}{\mathbb Z}
\newcommand{\eps}{\varepsilon}
\renewcommand{\epsilon}{\varepsilon}
\newcommand{\tagliato}{$\kern-5 mm -$}
\newcommand{\tagliat}{$\kern-4 mm -$}
\newcommand{\cchi}{\mbox{\large $\chi$}}
\newcommand{\ucv}{\rightrightarrows_{\text{\tiny loc}}}
\newcommand{\Ham}{\mathcal{H}\left(m,\Lambda,(a_r)_{r>0},(M_r)_{r>0}\right)}
\newcommand{\Haml}{\mathcal{H}\left(\ell,\Lambda,(a_r)_{r>0},(M_r)_{r>0}\right)}
\newcommand{\Hamtilde}{\mathcal{H}\big(m,\tilde\Lambda,(\tilde a_r)_{r>0},(\tilde M_r)_{r>0}\big)}
\newcommand{\Bam}{\mathfrak{B}\left(m,\Lambda,(a_r)_{r>0},(M_r)_{r>0}\right)}
\newcommand{\Bami}{\mathcal{B}\left(m_i,\Lambda,(a_r)_{r>0},(M_r)_{r>0}\right)}
\newcommand{\Bamtilde}{\mathcal{B}\big(m,\tilde\Lambda,(\tilde a_r)_{r>0},(\tilde M_r)_{r>0}\big)}
\newcommand{\Baml}{\mathcal{B}\left(\ell,\Lambda,(a_r)_{r>0},(M_r)_{r>0}\right)}
\newcommand{\Bamltilde}{\mathcal{B}\big(\ell,\tilde\Lambda,(\tilde a_r)_{r>0},(\tilde M_r)_{r>0}\big)}
\newtheorem{teorema}{Theorem}[section]
\newtheorem{prop}[teorema]{Proposition}
\newtheorem{lemma}[teorema]{Lemma}
\newtheorem{definition}[teorema]{Definition}
\newtheorem{guess}[teorema]{Remark}
\newtheorem{example}[teorema]{Example}
\newenvironment{esempio}{\begin{example} \begin{rm}}{\end{rm} \end{example}}
\begin{document}

\title{Existence and uniqueness of solutions to parabolic equations with superlinear Hamiltonians}
\author{Andrea Davini}
\address{Dip. di Matematica, {Sapienza} Universit\`a di Roma,
P.le Aldo Moro 2, 00185 Roma, Italy}

\email{davini@mat.uniroma1.it}

\subjclass[2010]{35K59, 35B40, 35B51.}
\keywords{quasilinear parabolic equation, nonconvex Hamiltonian, viscosity solution, comparison principle.}

\begin{abstract}
We give a proof of existence and uniqueness of viscosity solutions to parabolic quasilinear equations for a fairly general class of nonconvex  Hamiltonians with superlinear growth in the gradient variable. The approach is mainly based on classical techniques for  uniformly parabolic quasilinear equations and on the Lipschitz estimates proved in \cite{AT}, as well as on  viscosity solution arguments. 
\end{abstract}
\date{Submitted Version February 6, 2017}
\maketitle
%
%22

\section*{Introduction}
In this paper we prove existence and uniqueness of viscosity solutions to a parabolic quasilinear equation of the form 
\begin{equation}\label{intro eq parabolic}
{\partial_t u}-\D{tr}(A(x)D_x^2u)+H(x, D_x u)=0\quad \hbox{in $ (0,T)\times \R^d$}
\end{equation}
subject to bounded uniformly continuous initial data.  Here $A$ is a $d\times d$ symmetric and positive semi-definite matrix
with Lipschitz and bounded coefficients, and the Hamiltonian $H$ is a
locally Lipschitz function on $\R^{d}\times\R^d$, which has superlinear growth in the gradient
variable but is not necessarily convex. The precise conditions we assume on $H$ will be discussed later. 
Our interest for this issue originates from our recent work \cite{DK16}, where this type of results are needed 
for the study of related homogenization problems.

Existence and uniqueness results for equations of this kind are usually derived either via the classical approach to quasilinear 
parabolic equations, or from suitable comparison principles for semicontinuous 
viscosity sub and supersolutions through a standard application of Perron's method. 

The classical parabolic theory yields existence and uniqueness of classical solutions 
provided the diffusion matrix $A$ is regular enough and uniformly positive definite, and the nonlinearity $H$ grows at most 
quadratically with respect to the gradient variable, see \cite[Chapter V, \S 8]{Lady}. 

The second approach is, on the other hand, more flexible, 
but the comparison results available in literature are usually proved under a uniform continuity condition on $H$ of the form 
\[
|H(x,p)-H(y,p)|\leqslant \omega\big((1+|p|)|x-y|\big)\qquad\hbox{for all $x,y,p\in\R^d$,}
\]
for some continuity modulus $\omega$, see for instance \cite[hypothesis (3.14)]{users}, \cite[hypothesis (H2)]{BBBL03},   \cite[hypothesis (H1)]{BBL03}.  
Such a condition is typically not satisfied by Hamiltonians with superlinear growth in $p$ as soon as the dependence in $x$ and $p$ is not decoupled.
The case of Hamiltonians with superlinear growth in $p$ of polynomial type has been specifically addressed in \cite{daLioLey06, daLioLey08} for a 
class of equations and of initial data that includes ours as a special instance. The Hamiltonians therein considered may also depend on $t$ and are not uniformly superlinear with respect to $x$, but unfortunately the techniques employed allow the authors to treat only the case of $H$ that is either convex in $p$, as in  \cite{daLioLey08},  or the sum of a convex and a concave one, where either one of the two grows at most linearly with respect to $p$, see \cite{daLioLey06} and \cite[Remark 2.1]{daLioLey08}. 

Several results holding for viscous Hamilton--Jacobi equations have been recently presented in \cite{AT} for a fairly general 
class of $t$--independent Hamiltonians with superlinear growth in $p$. The precise conditions assumed on $H$ are the hypotheses  
(H3) and (H4) with $\mu=+\infty$ listed in Section \ref{subsection viscosity solutions} below. 
Stationary Hamilton--Jacobi equations are also considered, but we will restrict our discussion here to the parabolic case.
%parabolic equations of the form \eqref{intro eq parabolic}. 
The authors prove two kind of results: comparison principle for semicontinuous sub and supersolutions of \eqref{intro eq parabolic} with, let us say, sublinear growth at infinity, see \cite[Theorem 2.3]{AT}; and interior Lipschitz estimates for continuous solutions of \eqref{intro eq parabolic} whose time--derivative satisfies  a uniform bound from below, see \cite[Proposition 3.5]{AT} or Proposition \ref{prop Lip estimates} in the next section. The comparison principle is proved by employing techniques close to the ones used in \cite{daLioLey08}. For this, it is crucial to additionally assume $H$  convex in $p$. On the contrary, the Lipschitz estimates are independent of this convexity condition, which is therefore dropped. Moreover, the authors provide a quantitative estimate of such Lipschitz constants in terms of the parameters that appear in the structural hypotheses (H3)--(H4) below. This is very convenient when one is, for instance, interested in approximating a given Hamiltonian in this class. 

The present work is aimed at removing the convexity condition on $H$ from the existence and uniqueness part of the  quoted results of \cite{AT}. 
The existence results are herein established under the regime of conditions (H3)--(H4), while the uniqueness is obtained by proving suitable comparison principle for semicontinuous sub and supersolutions to \eqref{intro eq parabolic} with sublinear growth at infinity. In the case 
of uniformly continuous Hamiltonians, i.e. when (H4) holds with constants $a_r$, $M_r$ independent of $r$, such a comparison principle follows rather easily from the existence part. In this instance, in fact, the solutions constructed in the first part are globally Lipschitz in $\cTcyl$ 
whenever the initial datum belongs to $\D{C}^\infty(\R^d)\cap W^{2,\infty}(\R^d)$ and in order to compare them with a semicontinuous sub or supersolution we just need a mild uniform continuity property on $H$, which holds true in view of conditions (H3) and \eqref{h1.2} in (H4), see Proposition \ref{prop Lip comp}. By exploiting the density of such initial data in the class of bounded uniformly continuous functions, a general comparison principle for semicontinuous sub and supersolutions is finally derived, see Theorem \ref{teo comp}. 

When the Hamiltonian is not uniformly superlinear, this idea can no longer be applied since the solutions are, in the best case scenario, only locally Lipschitz in $\cTcyl$. 
To deal with this case, we revisit the arguments employed in \cite[Section 2]{AT} and propose a minor 
generalization of \cite[Theorem 2.3]{AT} for Hamiltonians that satisfy (H3)--(H4) with $\mu=+\infty$ and that can be written as the pointwise infimum of a collection of convex Hamiltonians $\{H_i\}_{i\in\I}$ of same type, where the constants that appear in the structural conditions do not depend on the index $i$, see Theorem \ref{teo parabolic comp}. Actually, we allow the associated exponents $m$ to possibly depend on $i$, and we remark that we do not need to assume neither condition \eqref{h1.2} nor even continuity with respect to $x$ for such $H_i$. This can be useful for applications, see Example \ref{ex 2}. 

The existence part is the core of this work. Our approach mimic the classical one for uniformly parabolic quasilinear equations, based on the use of the Schauder fixed point Theorem and on suitable {\em a priori}  $L^\infty$ and  H\"older estimates on the gradient of the solutions, with the difference that, in order to have the necessary compactness to apply these tools, we  approximate \eqref{intro eq parabolic}  with a sequence of periodic parabolic equations of the same type with diverging size of  periodicity. The advantage  is that, in this way, we  just need {\em a priori interior} $L^\infty$ and  H\"older estimates on the gradient of the solutions for an equation of the form \eqref{intro eq parabolic}. 
For the former we directly apply \cite[Proposition 3.5]{AT}, while for the latter we use more classical results, see \cite[Chapter VI, Theorem 1.1]{Lady}. 
The fact that we have an explicit expression for such $L^\infty$ bounds is crucial for the remainder of the proof.  We stress that conditions (H3)--(H4) could be replaced by any other set of assumptions yielding similar $L^\infty$ bounds, but it is important to have an explicit expression for them in order to be able to control the local Lipschtiz constants of the approximating solutions that intervene in the limiting procedures we bring into play.  

The arguments  we employ are not new and are certainly known to some experts, see for instance \cite[Section 4]{BaSou2001} or \cite[Section 3]{LS_viscous}, however we could not 
locate in literature any reference where the issues herein considered have been proved in this generality, at least as far as the case of uniformly superlinear Hamiltonians is concerned. Our main motivation to write this note was to provide a reference for this kind of results.
We hope this work could be useful for other researchers working in this domain.\smallskip
\subsection*{Plan of the paper.}
Section \ref{sez preliminaries} contains some preliminary material. 
In Section \ref{subsection notation} we fix notation and 
define the functional spaces we use in the paper. In particular, we define the H\"older and parabolic H\"older spaces and their norms, 
and recall an interpolation inequality 
and a compact immersion result we will need for the existence part. Section \ref{subsection viscosity solutions} contains our standing assumptions 
on the diffusion matrix $A$ and on the Hamiltonian $H$ and some viscosity solution preliminaries. 
The existence results are derived in Section \ref{sez existence}. In Section \ref{subsection classical} we deal with the uniformly parabolic case, while in Section \ref{subsection general existence} we derive the existence result in the general case. The uniqueness part is treated in Section \ref{sez comparison}. In Section \ref{subsection uniformly superlinear} we deal with the uniformly superlinear case, while Section \ref{subsection nonuniformly superlinear} is devoted to the case of non--uniformly superlinear Hamiltonians. The proof of the comparison principle stated in Theorem \ref{teo parabolic comp} is postponed to the Appendix. In Section \ref{subsection examples} we give some examples of non--uniformly superlinear Hamiltonians covered by our study.
\medskip

\indent{\textsc{Acknowledgements. $-$}}
I am grateful to Panagiotis Souganidis for a brief but illuminating discussion we had at 
the conference {\em Hamilton-Jacobi Equations: new trends and applications} held at Rennes (France), 30 May-3 June, 2016. 
I would also like to thank Annalisa Cesaroni and Fabio Punzo for references and suggestions concerning the classical theory of parabolic equations, 
 Elena Kosygina for her useful comments on earlier versions of this paper, Scott Armstrong and Hung Tran for their prompt replies to my queries about their  
 joint work \cite{AT} and related issues. 
\bigskip

\section{Preliminaries}\label{sez preliminaries}

\numberwithin{equation}{section}

\subsection{Notation and functional spaces}\label{subsection notation}
Throughout the paper, we will denote by $\N$ and $\N_0$ the set of positive and nonnegative integer numbers, respectively. We will denote by  $\langle\cdot,\cdot\rangle$ and $|\cdot|$ the scalar product and the Euclidean norm on $\R^d$, where $d$ is a positive integer number. 
We will denote by $B_r(x_0)$ and $B_r$ the open balls in $\R^d$ of
radius $r$ centered at $x_0$ and $0$, respectively. 
For a given a subset $E$ of $\R^d$ or of $\R^{d+1}$, we will denote by $\overline E$ its closure.

Given a metric space $X$, we will write
$\varphi_n\ucv\varphi$ on $X$ to mean that the sequence of
functions $(\varphi_n)_n$ uniformly converges to $\varphi$ on
compact subsets of $X$.  We will denote by $\D{C}(X)$, $\D{UC}(X)$, $\D{LSC}(X)$, $\D{USC}(X)$ the space of continuous, uniformly continuous, lower semicontinuous, upper semicontinuous real functions on the metric space $X$, respectively. We will add the subscript $b$ to those spaces to mean that we are considering functions that are also bounded on $X$.  

Given an open subset $\Omega$ of either $\R^d$ or $\R^{d+1}$ and a measurable function $g:\Omega\to\R$, we will denote by $\|g\|_{L^\infty(\Omega)}$ its usual $L^\infty$--norms. We will denote $L^\infty(\Omega)$ the space of essentially bounded functions on $\Omega$, and by $W^{k,\infty}(\Omega)$ the space of functions $u\in L^\infty(\Omega)$ having essentially bounded distributional derivatives up to order $k\in\N$, inclusively.

Let $D$ be a smooth domain of $\R^d$ and $k\in\N$.  We will denote by $\D{C}^k(D)$ the space of continuous functions $u:D \to\R$ that are differentiable in $D$ with continuous derivatives up to order $k$ inclusively,  and by $\D{C}^\infty(D):=\bigcap_{k\in\N} \D{C}^k(D)$. 
% we will denote by $\D{C}^k_b(D)$ the subspace of $\D{C}^k(D)$ of bounded functions on $D$, and by $\D{C}^\infty_b(D):=\bigcap_{k\in\N} \D{C}^k_b(D)$.
We will denote by $\D{C}^k(\overline D )$ the space of continuous functions $u:\overline D \to\R$ that are differentiable in $D $ with continuous derivatives on $\overline D $ up to order $k$ inclusively.  In what follows, the letter $s$ refers to a multi--index, namely $s=(s_1,\dots,s_d)\in(\N_0)^d$, the symbol $|s|$ refers to the quantity $s_1+\dots+s_d$, and with the symbol $D^su$ or $D^s_x u$ we mean $\partial^{s_1}_{x_1}\dots\partial^{s_d}_{x_d}u$.\smallskip 

Let $k\in\N$ and $\alpha\in (0,1)$. 
For $u\in \D{C}^{k}(\overline D )$ we set 
\begin{eqnarray}\label{norm H}
\|u\|_{H^{k+\alpha}(D )}:=\sum_{|s|\leqslant k}\|D^s u\|_{L^\infty(D )}+\sum_{|s|=k}  [ D^s u ]^{(\alpha)}_D ,
\end{eqnarray}
with
\[
  [\varphi ]^{(\alpha)}_D :=\sup_{\substack{x,y\in D \\ x\not=y}}\frac{|\varphi(x)-\varphi(y)|}{|x-y|^\alpha}.
\]
%and $\|\varphi\|^{(0)}_D:=\|\varphi\|_{L^\infty(D)}$. 
We define 
\[
H^{k+\alpha}(\overline D ):=\{u\in \D{C}^{k}(\overline D )\,:\, \|u\|_{H^{k+\alpha}(D )}<+\infty\,\}. 
\]
The H\'older space  $H^{k+\alpha}(\overline D )$, endowed with the norm \eqref{norm H}, is a Banach space, see \cite{Lady}.

We record here for later use the following density result.

\begin{lemma}\label{lemma density}
The space of functions $\D{C}^\infty(\R^d)\cap W^{2,\infty}(\R^d)$ is dense in $\D{UC}_b(\R^d)$ with respect to the $\|\cdot\|_{L^\infty(\R^d)}$ norm.
\end{lemma}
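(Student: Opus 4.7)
The plan is to construct an approximation via mollification. Fix a standard mollifier $\rho\in\D{C}^\infty_c(\R^d)$ with $\rho\geqslant 0$, $\supp\rho\subseteq B_1$, and $\int_{\R^d}\rho\,dx=1$, and set $\rho_\eps(x):=\eps^{-d}\rho(x/\eps)$ for $\eps>0$. Given $f\in\D{UC}_b(\R^d)$, define the convolution $f_\eps:=f*\rho_\eps$. I would then verify the three needed properties in turn: smoothness, bounds on derivatives up to order two, and uniform convergence.

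First, $f_\eps\in\D{C}^\infty(\R^d)$ by the standard differentiation-under-the-integral argument, since $\rho_\eps$ is smooth and $f$ is bounded. Second, for any multi-index $s$ with $|s|\leqslant 2$,
\[
D^s f_\eps(x)=\int_{\R^d} f(x-y)\,D^s\rho_\eps(y)\,dy,
\]
so $\|D^s f_\eps\|_{L^\infty(\R^d)}\leqslant \|f\|_{L^\infty(\R^d)}\|D^s\rho_\eps\|_{L^1(\R^d)}\leqslant C_s\,\eps^{-|s|}\|f\|_{L^\infty(\R^d)}$, which gives $f_\eps\in W^{2,\infty}(\R^d)$ (in fact $f_\eps\in W^{k,\infty}(\R^d)$ for every $k$). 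In particular, $f_\eps\in\D{C}^\infty(\R^d)\cap W^{2,\infty}(\R^d)$.

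Third, and this is the only substantive step, I would exploit the uniform continuity of $f$ to show $f_\eps\to f$ uniformly on $\R^d$ as $\eps\to 0^+$. Let $\omega$ be a modulus of continuity for $f$, so that $|f(x)-f(z)|\leqslant \omega(|x-z|)$ for all $x,z\in\R^d$, with $\omega(r)\to 0$ as $r\to 0^+$. Using $\int\rho_\eps\,dy=1$ and $\supp\rho_\eps\subseteq\overline{B_\eps}$,
\[
|f_\eps(x)-f(x)|\leqslant \int_{B_\eps} |f(x-y)-f(x)|\,\rho_\eps(y)\,dy\leqslant \omega(\eps)
\]
for every $x\in\R^d$, so $\|f_\eps-f\|_{L^\infty(\R^d)}\leqslant\omega(\eps)\to 0$. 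This proves the density claim.

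No step is really an obstacle here; the only point worth flagging is that the global $L^\infty$ control on derivatives of $f_\eps$ relies essentially on $f$ being bounded (not merely locally integrable), and the \emph{uniform} (as opposed to merely local uniform) convergence $f_\eps\to f$ is where the uniform continuity hypothesis on $f$ is used in an essential way.
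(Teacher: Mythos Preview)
Your proof is correct. It is in fact slightly more direct than the paper's own argument: the paper first invokes an external result (density of $W^{1,\infty}(\R^d)$ in $\D{UC}_b(\R^d)$ in the sup norm) and only then mollifies the resulting Lipschitz approximants, whereas you mollify $f\in\D{UC}_b(\R^d)$ directly and use its modulus of continuity to obtain $\|f_\eps-f\|_{L^\infty(\R^d)}\leqslant\omega(\eps)$. The intermediate passage through $W^{1,\infty}$ is not actually needed for the statement as written, so your one-step argument is cleaner; the paper's detour would only pay off if one wanted, say, $\eps$-independent control on $\|D f_\eps\|_{L^\infty}$, which the lemma does not require.
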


\begin{proof}
Since $W^{1,\infty}(\R^d)$ is dense in $\D{UC}_b(\R^d)$ with respect to the $\|\cdot\|_{L^\infty(\R^d)}$ norm, see for instance \cite[Theorem 1]{GaJa}, it is enough to show that any Lipschitz and bounded function $g:\R^d\to\R$ can be uniformly approximated in $\R^d$ by functions in $\D{C}^\infty(\R^d)\cap W^{2,\infty}(\R^d)$. But this readily follows by regularizing $g$ via a convolution with a standard mollification kernel. 
\end{proof}

For a given $T>0$ and a smooth domain $D$ of $\R^d$, we will denote by $D_T$ the set $(0,T)\times D$. 
%we will denote by $\D{C}^{k/2,k}(D_T)$ the space of functions $u:D_T\to\R$ that are continuous in $D_T$ together with all derivatives of the form 
%$\partial_t^rD_x^s u$ for $2r+|s|\leqslant k$. 
We will denote by $\D{C}^{k/2,k}(\overline D_T)$ the space of functions $u:\overline D_T\to\R$ that are continuous 
in $\overline D_T$ together with all derivatives of the form $\partial_t^rD_x^s u$ for $2r+|s|\leqslant k$.

Let $\alpha\in (0,1)$. For $\psi\in\D{C}(\overline D_T)$, we set 
$[\psi]_{D_T}^{(\alpha)}:=  [ \psi ]_{t,D_T}^{(\alpha/2)}+  [ \psi ]_{x,D_T}^{(\alpha)}$, where 
\begin{align*}
  [ \psi ]_{t,D_T}^{(\alpha/2)}:=\sup_{x\in D} \|\psi(\cdot,x)\|_{H^{\alpha/2}((0,T))},\qquad
  [ \psi ]_{x,D_T}^{(\alpha)}:=\sup_{0<t<T} \| \psi(t,\cdot) \|_{H^\alpha(D)}.
\end{align*}
We introduce the following norms:
\begin{align*}
& \|u\|_{H^{\alpha/2,\alpha}(D_T)}:=\|u\|_{L^\infty(D_T)}+ [ u ]_{D_T}^{(\alpha)},\\
& \|u\|_{H^{(1+\alpha)/2,1+\alpha}(D_T)} :=\|u\|_{L^\infty(D_T)}+\sum_{i=1}^d\|\partial_{x_i} u\|_{H^{\alpha/2,\alpha}(D_T)}+ [ u ]_{t,D_T}^{\left(\frac{1+\alpha}{2}\right)},\\
&  \|u\|_{H^{(2+\alpha)/2,2+\alpha}(D_T)}:=
\| u\|_{L^\infty(D_T)}+ \sum_{i=1}^d\|\partial_{x_i} u\|_{H^{(1+\alpha)/2,1+\alpha}(D_T)}
+\| \partial_t u\|_{H^{\alpha/2,\alpha}(D_T)}.
\end{align*}
For $k\in\{0,1,2\}$, we define
\[
H^{(k+\alpha)/2,k+\alpha}(\overline D_T):=\{u\in \D{C}^{k/{2},k}(\overline D_T)\,:\, \|u\|_{H^{(k+\alpha)/2,k+\alpha}(D_T)}<+\infty\,\}. 
\]
The parabolic H\"older space $H^{k+\alpha/2,k+\alpha}(\overline D_T)$, endowed with the norm  
$\|\cdot\|_{H^{(k+\alpha)/2,k+\alpha}(D_T)}$, is a Banach space, see \cite{Lady}.

In the sequel we will often 
write 
\[
 \|D_x u\|:=\sum_{i=1}^d \|\partial_{x_i} u\|,\qquad \|D^2_x u\|:=\sum_{i,j=1}^d \|\partial^2_{x_ix_j} u\|, 
\]
where $u$ is a real function defined either on $D$ or on $D_T$ and $\|\cdot\|$ is a norm. 

We record the following result for further use:

\begin{prop}\label{prop parabolic interpolation}
Let $D$ be an open and convex subset of $\R^d$, $T>0$ and $\alpha\in (0,1)$. There exists a constant $N=N(d,D)$ such that for any $\eps>0$ and $u\in H^{(2+\alpha)/2,2+\alpha}(\overline D_T)$ we have 
\[
 \|D_x u\|_{H^{\alpha/2,\alpha}(D_T)}\leqslant 3\eps \|u\|_{H^{(2+\alpha)/2,2+\alpha}(D_T)}
 +{N}\max\{\eps^{-1/(1+\alpha)},\eps^{-(1+\alpha)} \}\|u\|_{L^\infty(D_T)}.
\]
\end{prop}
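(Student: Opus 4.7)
The plan is to decompose the parabolic norm on the left according to the paper's conventions,
\[
\|D_x u\|_{H^{\alpha/2,\alpha}(D_T)} \;=\; 3\,\|D_x u\|_{L^\infty(D_T)} \;+\; \sup_{t\in(0,T)}[D_x u(t,\cdot)]^{(\alpha)}_D \;+\; \sup_{x\in D}[D_x u(\cdot,x)]^{(\alpha/2)}_{(0,T)},
\]
where the factor $3$ and the two pure Hölder seminorms on the right come from expanding the definitions of $[\cdot]^{(\alpha/2)}_{t,D_T}$ and $[\cdot]^{(\alpha)}_{x,D_T}$, each of which already contains a copy of $\|\cdot\|_{L^\infty(D_T)}$. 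I would then bound each of the three pieces separately by a classical Gagliardo--Nirenberg/Ehrling interpolation inequality, converting the resulting multiplicative estimates into the additive form of the statement via Young's inequality in the shape $a^\theta b^{1-\theta}\leq \varepsilon\,b+C\,\varepsilon^{-(1-\theta)/\theta}\,a$.

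For the first two pieces I freeze $t\in(0,T)$ and invoke the classical Hölder interpolation on the convex set $D$ (see \cite{Lady}, Chapter II), which gives, with $N=N(d,D)$,
\[
\|D_x u(t,\cdot)\|_{L^\infty(D)} \leq N\,\|u(t,\cdot)\|_{L^\infty(D)}^{(1+\alpha)/(2+\alpha)}\,\|u(t,\cdot)\|_{H^{2+\alpha}(\overline D)}^{1/(2+\alpha)},
\]
\[
[D_x u(t,\cdot)]^{(\alpha)}_D \leq N\,\|u(t,\cdot)\|_{L^\infty(D)}^{1/(2+\alpha)}\,\|u(t,\cdot)\|_{H^{2+\alpha}(\overline D)}^{(1+\alpha)/(2+\alpha)}.
\]
Applying Young's inequality with $\theta=(1+\alpha)/(2+\alpha)$ in the first display and $\theta=1/(2+\alpha)$ in the second (which produces respective exponents $-1/(1+\alpha)$ and $-(1+\alpha)$ on $\varepsilon$), taking $\sup_t$, and using $\sup_t\|u(t,\cdot)\|_{H^{2+\alpha}(\overline D)}\leq\|u\|_{H^{(2+\alpha)/2,2+\alpha}(D_T)}$, one obtains for every $\varepsilon>0$
\[
\|D_x u\|_{L^\infty(D_T)} \leq \varepsilon\,\|u\|_{H^{(2+\alpha)/2,2+\alpha}(D_T)} + N\,\varepsilon^{-1/(1+\alpha)}\,\|u\|_{L^\infty(D_T)},
\]
\[
\sup_t[D_x u(t,\cdot)]^{(\alpha)}_D \leq \varepsilon\,\|u\|_{H^{(2+\alpha)/2,2+\alpha}(D_T)} + N\,\varepsilon^{-(1+\alpha)}\,\|u\|_{L^\infty(D_T)}.
\]

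For the third piece I freeze $x\in D$ and apply the elementary one-dimensional Hölder interpolation between the exponents $\alpha/2$ and $(1+\alpha)/2$ to $g(t):=D_x u(t,x)$, namely
\[
[g]^{(\alpha/2)}_{(0,T)} \leq C\,\|g\|_{L^\infty((0,T))}^{1/(1+\alpha)}\,\bigl([g]^{((1+\alpha)/2)}_{(0,T)}\bigr)^{\alpha/(1+\alpha)};
\]
Young's inequality together with $\sup_x[g]^{((1+\alpha)/2)}_{(0,T)}\leq\|u\|_{H^{(2+\alpha)/2,2+\alpha}(D_T)}$ gives $\sup_x[D_x u(\cdot,x)]^{(\alpha/2)}_{(0,T)} \leq \eta\,\|u\|_{H^{(2+\alpha)/2,2+\alpha}(D_T)} + C\,\eta^{-\alpha}\,\|D_x u\|_{L^\infty(D_T)}$ for every $\eta>0$, and substituting here the previous bound for $\|D_x u\|_{L^\infty(D_T)}$ with auxiliary parameter chosen proportional to $\eta^{1+\alpha}$ to balance the two contributions produces
\[
\sup_x[D_x u(\cdot,x)]^{(\alpha/2)}_{(0,T)} \leq \varepsilon\,\|u\|_{H^{(2+\alpha)/2,2+\alpha}(D_T)} + N\,\varepsilon^{-(1+\alpha)}\,\|u\|_{L^\infty(D_T)}.
\]
Summing the three bounds (using $\varepsilon/3$ in the first step so that the contribution $3\|D_x u\|_{L^\infty(D_T)}$ totals exactly $\varepsilon$ on the principal term) and using $\varepsilon^{-1/(1+\alpha)}+\varepsilon^{-(1+\alpha)}\leq 2\max\{\varepsilon^{-1/(1+\alpha)},\varepsilon^{-(1+\alpha)}\}$ yields the claimed inequality. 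The two distinct rates reflect genuinely different interpolations (the $L^\infty$ part of the gradient has a milder Young exponent than the Hölder seminorms), and the $\max$ is needed to cover both the regimes $\varepsilon<1$ and $\varepsilon>1$. I expect the only technically delicate point to be the spatial Hölder interpolation at fixed $t$, whose constant is kept dependent only on $d$ and $D$ precisely because the convexity of $D$ ensures that any two of its points can be joined by a segment of $D$.
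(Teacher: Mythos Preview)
Your approach is correct and self--contained, but it differs from the paper's, which simply invokes Theorem~8.8.1 in Krylov's book \cite{Kr} (the parabolic interpolation inequalities~(8.8.3)--(8.8.4) there, valid on convex cylinders) and then checks that the seminorms used in \cite{Kr} dominate, respectively are dominated by, those defined here. What you do instead is essentially reprove that result by hand: spatial H\"older interpolation on the convex set $D$ at each fixed~$t$ controls $\|D_xu\|_{L^\infty}$ and $\sup_t[D_xu(t,\cdot)]^{(\alpha)}_D$, while the elementary one--dimensional interpolation between the exponents $\alpha/2$ and $(1+\alpha)/2$ in~$t$, fed back into the first estimate, controls $\sup_x[D_xu(\cdot,x)]^{(\alpha/2)}_{(0,T)}$. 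Two minor points: the identity you write for $\|D_xu\|_{H^{\alpha/2,\alpha}(D_T)}$ is actually only the inequality $\leqslant$ (a sup of a sum is no larger than the sum of the sups), which is all you need; and the spatial interpolation constants on an open convex $D$ in fact depend only on $d$, since convexity lets you join any two points by a segment in $D$ and run the standard mean--value argument. Your route buys a proof that does not hinge on locating a precise statement in \cite{Kr}, at the cost of some bookkeeping with Young's inequality; the paper's route is a one--line citation.
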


\begin{proof}
We apply  \cite[\S 8.8, Theorem 8.8.1]{Kr}. The assertion follows by summing the inequalities (8.8.3) and (8.8.4) and by noticing that 
\[
 [\partial_{x_i} u]_{\alpha/2,\alpha;D_T} 
 \geqslant
 \frac{1}{2}[\partial_{x_i} u]^{(\alpha)}_{D_T},
 \qquad
 [u]_{1+\alpha/2,2+\alpha;D_T}
 \leqslant
  \|u\|_{H^{(2+\alpha)/2,2+\alpha}(D_T)}.
\]
\end{proof}

For $n\in\N$, we will denote by $\D{C}_n^k(\R^d)$, $\D{C}_n^{k/2,k}(\cTcyl)$, $H_n^{(k+\alpha)/2,k+\alpha}(\cTcyl)$ the subspace of $\D{C}^k(\R^d)$, $\D{C}^{k/2,k}(\cTcyl)$, $H^{(k+\alpha)/2,k+\alpha}(\cTcyl)$, respectively, made up of functions that are $n\Z^d$--periodic in $\R^d$ with respect to the $x$--variable. We record for later use the following result, that can be easily proved with the aid of Ascoli--Arzel\`a Theorem.  

\begin{prop}\label{prop precompact}
Let  $n\in\N$, $\alpha\in (0,1)$ and $T>0$. The bounded subsets of the space $H_n^{(1+\alpha)/2,1+\alpha}(\cTcyl)$ are precompact in 
$H_n^{\alpha/2,\alpha}(\cTcyl)$.\medskip 
\end{prop}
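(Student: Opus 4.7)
The plan is first to extract a uniformly convergent subsequence by combining the $n\Z^d$--periodicity with Arzel\`a--Ascoli applied on a single period, and then to upgrade this $L^\infty$ convergence to convergence in $H_n^{\alpha/2,\alpha}(\cTcyl)$ by means of an elementary interpolation between H\"older seminorms of different orders.

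Let $(u_k)_k$ be a sequence bounded by some $M>0$ in $H_n^{(1+\alpha)/2,1+\alpha}(\cTcyl)$. By $n\Z^d$--periodicity I can restrict attention to the compact cylinder $K:=[0,T]\times[0,n]^d$. The uniform bounds on $\|u_k\|_{L^\infty}$ and on $\|\partial_{x_i}u_k\|_{L^\infty}$, together with the uniform H\"older control on $u_k$ in $t$ (of order $(1+\alpha)/2$) and on each $\partial_{x_i}u_k$ in both $t$ and $x$ (of orders $\alpha/2$ and $\alpha$), make the two families $(u_k)_k$ and $(\partial_{x_i}u_k)_k$ simultaneously equicontinuous on $K$. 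A diagonal application of the Arzel\`a--Ascoli theorem then produces a subsequence (not relabeled) and a continuous function $u$ on $K$ such that $u_k\to u$ and $\partial_{x_i}u_k\to \partial_{x_i}u$ uniformly on $K$ for every $i=1,\dots,d$; periodicity extends these uniform convergences to the whole of $\cTcyl$, and the limit $u$ inherits $n\Z^d$--periodicity in $x$ together with the Lipschitz bound $\|D_x u\|_{L^\infty}\leqslant M$.

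To convert this uniform convergence into convergence in $H_n^{\alpha/2,\alpha}(\cTcyl)$, I invoke the elementary interpolation inequality, valid for every continuous real valued function $v$ on an interval or convex subset of Euclidean space and any $0<\alpha_1<\alpha_2\leqslant 1$:
\[
[v]^{(\alpha_1)}\leqslant 2^{\,1-\alpha_1/\alpha_2}\bigl([v]^{(\alpha_2)}\bigr)^{\alpha_1/\alpha_2}\|v\|_\infty^{\,1-\alpha_1/\alpha_2},
\]
which follows by writing $|v(x)-v(y)|=|v(x)-v(y)|^{\alpha_1/\alpha_2}\cdot|v(x)-v(y)|^{1-\alpha_1/\alpha_2}$ and bounding each factor separately. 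Applied with $(\alpha_1,\alpha_2)=(\alpha,1)$ to the spatial differences $u_k(t,\cdot)-u(t,\cdot)$, whose Lipschitz constant in $x$ stays bounded by $2M$ while $\|u_k-u\|_{L^\infty}\to 0$, it yields $[u_k-u]_{x,\Tcyl}^{(\alpha)}\to 0$. Applied with $(\alpha_1,\alpha_2)=(\alpha/2,(1+\alpha)/2)$ to the temporal slices $u_k(\cdot,x)-u(\cdot,x)$, whose H\"older seminorm of order $(1+\alpha)/2$ is uniformly bounded in $x$ by $2M$, it yields $[u_k-u]_{t,\Tcyl}^{(\alpha/2)}\to 0$. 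Together with $\|u_k-u\|_{L^\infty}\to 0$, this is exactly convergence of $u_k$ to $u$ in $H_n^{\alpha/2,\alpha}(\cTcyl)$.

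The delicate point that I expect to be the main obstacle is precisely this last step: Arzel\`a--Ascoli alone delivers only uniform convergence together with uniform H\"older \emph{bounds} on the differences $u_k-u$, and it is the interpolation inequality above that then forces the relevant H\"older seminorms of those differences themselves to shrink to zero.
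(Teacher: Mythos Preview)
Your proof is correct and follows exactly the route the paper suggests: the paper does not give a detailed argument but merely states that the result ``can be easily proved with the aid of Ascoli--Arzel\`a Theorem''; you have supplied the details by combining Arzel\`a--Ascoli on a single period with the standard interpolation between H\"older seminorms. One minor remark: the extraction of uniform convergence of the spatial derivatives $\partial_{x_i}u_k$ is superfluous, since the uniform Lipschitz bound on $u$ in $x$ already follows from the uniform Lipschitz bounds on $u_k$ together with $u_k\to u$ in $L^\infty$, and that is all you need for the interpolation step.
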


\subsection{Viscosity solution theory}\label{subsection viscosity solutions}
In this paper we will consider parabolic quasilinear equations of the form 
\begin{equation}\label{eq parabolic}
{\partial_t u}-\D{tr}(A(x)D_x^2u)+H(x, D_x u)=0\quad \hbox{in $ (0,T)\times U$,}
\end{equation}
where $T>0$ and $U$ is an open subset of $\R^d$. The diffusion matrix $A(x)$ is a positive semidefinite symmetric $d\times d$ matrix, depending on $x\in\R^d$, with bounded and Lipschitz square root, namely $A=\sigma\sigma^T$ for some $\sigma:\R^d\to\R^{d\times n}$, where $\sigma$ satisfies the following hypotheses fos some fixed constant $\Lambda_A>0$:
\begin{itemize}
\item[(A1)] \quad $|\sigma(x)|\leqslant \Lambda_A$ \quad for every $x\in\R^d$;\smallskip
\item[(A2)] \quad $|\sigma(x)-\sigma(y)|\leqslant \Lambda_A |x-y|$ \quad for every $x,y\in\R^d$.\medskip
\end{itemize}
We emphasize that the diffusion matrix can be degenerate, in general. 

The nonlinearity $H$, henceforth called {\em Hamiltonian}, is a function $H:\R^d\times\R^d\to \R$ satisfying the following basic assumptions:
\begin{itemize}
\item[(H1)] there exist a continuous, coercive and nondecreasing functions $\Theta:\R_{+}\to\R$ and a constant $\mu\in\R$ such that 
\[
-\mu\leqslant H(x,p)\leqslant\Theta(|p|)\qquad\hbox{for every $(x,p)\in\R^d\times\R^d$};\smallskip
\]
\item[(H2)] \quad $H\in\D{UC}(\R^d\times B_r)$\quad for every $r>0$.\medskip
\end{itemize}
By coercive, we mean that $\displaystyle\lim_{h\to+\infty}{\Theta(h)}=+\infty$. The second inequality in (H1) amounts to saying that the Hamiltonian is locally bounded in $p$, uniformly with respect to $x$.

In order to obtain Lipschitz estimates for solutions to \eqref{eq parabolic}, we introduce another set of assumptions on $H$, holding for  constants $m>1$ and $\mu>0$:
\begin{itemize}
\item[(H3)] \quad $|H(x,p)-H(x,q)|\leqslant\Lambda\left(|p|+|q|+1\right)^{m-1}|p-q|$\quad for all $x,p,q\in\R^d$;\medskip
\item[(H4)] \quad for every $r>0$, there exist constants $a_r\in (0,1]$ and $M_r\geqslant 1$ such that 
\begin{align}
\max\left\{-\mu, a_r|p|^m-M_r \right\}\leqslant H(x,p)&\leqslant\Lambda(|p|^m+1) \label{h1.1}\\
|H(x,p)-H(y,p)|&\leqslant(\Lambda |p|^m+M_r)|x-y|  \label{h1.2}
\end{align}
\quad for all $x,y\in B_r$ and $p\in\R^d$.\medskip
\end{itemize}

When the above constants $\alpha_r,\,M_r$ can be chosen independently of $r$, we will say that the Hamiltonian is {\em uniformly superlinear}. 
Note that, in this instance, one can choose $\mu=+\infty$ in \eqref{h1.1}, as in \cite{AT}, and that condition (H2) is fulfilled. When on the other hand $H$ is not uniformly superlinear, condition (H2) needs not hold. 

Unless otherwise specified, all the differential inequalities in the paper are to be interpreted in the {\em viscosity} sense, which is the usual notion of weak solution for Hamilton--Jacobi equations. We briefly recall some basic definitions and refer  to \cite{barles_book, users} for further details. 

We will say that a function $v\in\D{USC}((0,T)\times U)$ is an (upper semicontinuous) {\em viscosity subsolution} of 
\eqref{eq parabolic} if, for every  $\phi\in\D{C}^2(\TUcyl)$ such that $v-\phi$ attains a local maximum at $(t_0,x_0)\in (0,+\infty)\times U$, we have 
\begin{equation*}\label{app subsolution test}
\partial_t \phi(t_0,x_0)-\Tr\big(A(x_0)D_x^2\phi(t_0,x_0)\big)+H(x_0, D_x \phi(t_0,x_0))\leqslant 0. 
\end{equation*}
Any such test function $\phi$ will be called {\em supertangent} to $v$ at $(t_0,x_0)$. 
%The second--order {\em superjet} $\Pcal^{2,+}v(t_0,x_0)$ of $v$ at $(t_0,x_0)$ is defined as the set of elements of the form $\left(\partial_t \phi(t_0,x_0),  D_x\phi(t_0,x_0),D^2_x\phi(t_0,x_0)\right)$ whenever $\phi$ is a $C^2$ supertangent to $v$ at $(t_0,x_0)$.
% \begin{align*}
% \Pcal^{2,+}v(t_0,x_0):= &\big\{\left(\partial_t \phi(t_0,x_0),  D_x\phi(t_0,x_0),D^2_x\phi(t_0,x_0)\right)\, :\\
% &\qquad\qquad\qquad\qquad\qquad \hbox{$\phi$ is a $C^2$ supertangent to $v$ at $(t_0,x_0)$}\,\big\} 
% \end{align*}

We will say that $w\in\D{LSC}((0,+\infty)\times U)$ is a (lower semicontinuous) {\em viscosity supersolution} of \eqref{eq parabolic} if, for every $\phi\in\D{C}^2(\TUcyl)$ such that $w-\phi$ attains a local minimum at $(t_0,x_0)\in (0,+\infty)\times \R^d$, we have 
\begin{equation*}\label{app supersolution test}
\partial_t \phi(t_0,x_0)-\Tr\big(A(x_0)D_x^2\phi(t_0,x_0)\big)+H(x_0, D_x \phi(t_0,x_0))\geqslant 0. 
\end{equation*}
Any such test function $\phi$ will be called {\em subtangent} to $w$ at $(t_0,x_0)$. 
%The definition of second--order {\em subjet} $\Pcal^{2,-}u(t_0,x_0)$ of $u$ at $(t_0,x_0)$ is analogous to that of superjet by replacing supertangents with subtangents. 
% 
% 
% The second--order {\em subjet} $\Pcal^{2,-}u(t_0,x_0)$ of $u$ at $(t_0,x_0)$ is defined as the set of elements of the form $\left(\partial_t \phi(t_0,x_0),  D_x\phi(t_0,x_0),D^2_x\phi(t_0,x_0)\right)$ whenever $\phi$ is a $C^2$ subtangent to $u$ at $(t_0,x_0)$.
It is well known, see for instance \cite{barles_book, users}, that the notion of sub or supertangent is local, in the sense that the test function $\phi$ needs to be defined only in a neighborhood of the point $(t_0,x_0)$. A continuous function on $\cyl$ is a {\em viscosity solution} of \eqref{eq parabolic} if it is both a viscosity sub and supersolution.\medskip

The following comparison principle holds:

\begin{prop}\label{prop Lip comp}
Assume that $A$ satisfy (A1)--(A2) and  $H\in\D{UC}\left( U\times B_r\right)$ for every $r>0$, where $U$ is an open subset of $\R^d$. Let $v\in\D{USC}([0,T]\times\overline U)$ and $w\in\D{LSC}([0,T]\times\overline U)$ be, respectively, a sub and a supersolution of \eqref{eq parabolic} satisfying 
\begin{equation}\label{hyp 2}
\limsup_{\substack{|x|\to +\infty\\ x\in U}}\ \sup_{t\in [0,T]}\frac{v(t,x)}{1+|x|}\leqslant 0 \leqslant \liminf_{\substack{|x|\to +\infty\\ x\in U}}\ \inf_{t\in [0,T]}\frac{w(t,x)}{1+|x|}.
\end{equation}
Let us furthermore assume that either $D_x v$ or $D_x w$ belongs to $\big(L^\infty\left((0,T)\times U\right)\big)^d$. Then \quad   
\[
v(t,x)-w(t,x)\leqslant \sup_{\partial_P\left(\TUcyl\right)}\big(v-w\big)\qquad\hbox{for every  $(t,x)\in (0,T)\times U$,} 
\]
where $\partial_P\left(\TUcyl\right):=\{0\}\times U\cup [0,T)\times\partial U$ is the parabolic boundary of $\TUcyl$. 
\end{prop}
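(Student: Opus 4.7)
The plan is the classical doubling-of-variables comparison argument, with the Lipschitz hypothesis on one of $v,w$ (WLOG $D_xv\in L^\infty$) playing the crucial role of localizing the gradient arguments of $H$ to a bounded ball, so that the mere local uniform continuity $H\in\D{UC}(U\times B_r)$ is enough to close the argument. Suppose, for contradiction, that $\sup_{(0,T)\times U}(v-w)>M_0:=\sup_{\partial_P}(v-w)$, where $\partial_P:=\{0\}\times U\cup[0,T)\times\partial U$. Fix $\eta>0$ small enough that $\tilde v(t,x):=v(t,x)-M_0-\eta/(T-t)$ still satisfies $\sup(\tilde v-w)>0$. Then $\tilde v\leqslant w$ on $\partial_P$, and a standard computation shows that $\tilde v$ is a viscosity strict subsolution of \eqref{eq parabolic}, in the sense that the subsolution inequality holds with $-\eta/(T-t)^2$ on the right-hand side; moreover $\tilde v$ inherits from $v$ the Lipschitz constant $L:=\|D_xv\|_{L^\infty}$ in the $x$-variable.

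For parameters $\alpha,\beta>0$ consider the doubled test
\[
\Psi_{\alpha,\beta}(t,x,y):=\tilde v(t,x)-w(t,y)-\tfrac{\alpha}{2}|x-y|^2-\beta(|x|^2+|y|^2).
\]
The sublinear growth assumption \eqref{hyp 2} and the quadratic $\beta$-penalty ensure the supremum is attained at some $(\hat t,\hat x,\hat y)\in[0,T]\times\overline U\times\overline U$. Standard penalization estimates give $|\hat x-\hat y|\to 0$ and $\alpha|\hat x-\hat y|^2\to 0$ as $\alpha\to\infty$ with $\beta$ fixed, together with $\beta|\hat x|,\beta|\hat y|\to 0$ as $\beta\to 0$ (the latter again from \eqref{hyp 2}). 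Combined with the semicontinuity of $v,w$ and $\tilde v\leqslant w$ on $\partial_P$, this forces $(\hat t,\hat x,\hat y)\in(0,T)\times U\times U$ for $\alpha$ large and $\beta$ small. Applying the parabolic Crandall-Ishii lemma, writing the viscosity inequalities for $\tilde v$ and $w$ and subtracting yields
\[
\frac{\eta}{(T-\hat t)^2}\ \leqslant\ \Tr\bigl(A(\hat x)X-A(\hat y)Y\bigr)+H(\hat y,p^-)-H(\hat x,p^+),
\]
where $p^+:=\alpha(\hat x-\hat y)+2\beta\hat x$ and $p^-:=\alpha(\hat x-\hat y)-2\beta\hat y$. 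The trace term is bounded by $3\alpha\Lambda_A^2|\hat x-\hat y|^2+C\beta$ via (A1)-(A2) and Ishii's matrix inequality, and thus vanishes in the iterated limit $\alpha\to\infty$, $\beta\to 0$.

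The main obstacle is the Hamiltonian difference, since $H$ is not assumed uniformly continuous in $(x,p)$ jointly. This is where the Lipschitz hypothesis is decisive: because $p^+$ is the spatial gradient at $\hat x$ of a $C^2$ supertangent to the $L$-Lipschitz function $\tilde v(\hat t,\cdot)$, a standard argument gives $|p^+|\leqslant L$, whence $|p^-|\leqslant L+2\beta(|\hat x|+|\hat y|)\leqslant L+1$ once $\beta$ is small. Both gradients therefore lie in the fixed ball $B_{L+1}$, so that a modulus of continuity $\omega$ of $H$ on $U\times B_{L+1}$ (available by hypothesis) yields
\[
|H(\hat y,p^-)-H(\hat x,p^+)|\ \leqslant\ \omega(|\hat x-\hat y|)+\omega\bigl(2\beta(|\hat x|+|\hat y|)\bigr),
\]
which tends to $0$ as $\alpha\to\infty$ and then $\beta\to 0$. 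The displayed inequality then reduces to $\eta/T^2\leqslant 0$, a contradiction; letting $\eta\to 0$ recovers $v-w\leqslant M_0$ on $(0,T)\times U$.
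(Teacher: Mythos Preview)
Your proposal is correct and follows essentially the same doubling-of-variables strategy as the paper's own proof in the Appendix (which runs the argument of Theorem~\ref{teo parabolic comp} with $s=1$). The only cosmetic differences are that the paper employs the linear-growth penalty $\eta\sqrt{1+|x|^2}$ on a single variable rather than your quadratic penalty $\beta(|x|^2+|y|^2)$ on both, and places the time penalty $b/(T-t)$ on $w$ instead of on $v$; the decisive step---using the Lipschitz bound on one of $v,w$ to trap the approximate gradients in a fixed ball $B_{\kappa+1}$ and then invoking $H\in\D{UC}(U\times B_{\kappa+1})$---is identical.
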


The proof is standard, however we provide it in the Appendix for the reader's convenience. 

A first application of the above comparison principle is the following.

\begin{prop}\label{prop t-Lip}
Assume that $A$ satisfy (A1)--(A2) and  $H$ satisfies (H1)--(H2). Let $u\in\D{C}_b(\cTcyl)$ be a solution of \eqref{eq parabolic} with $U:=\R^d$ satisfying the initial condition $u(0,\cdot)=g$ for some $g\in\D{W}^{2,\infty}(\R^d)$. Let us furthermore assume that $D_x u\in\big(L^\infty(\Tcyl)\big)^d$. Then there exists a constant $\kappa$, only depending on $\|Dg\|_{L^\infty(\R^d)}$, $\|D^2 g\|_{L^\infty(\R^d)}$, $\mu$, $\Lambda_A$ and on the function $\Theta$, such that 
\[
 |u(t,x)-u(s,x)|\leqslant\kappa|t-s|\qquad\hbox{for all $(t,x),\,(s,x)\in \cTcyl$.}
\]
\end{prop}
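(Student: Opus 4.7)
The plan is to first establish the time-Lipschitz bound at the initial time $t=0$ by constructing explicit affine-in-$t$ barriers, and then to propagate this bound to arbitrary times by exploiting the time-autonomy of \eqref{eq parabolic} together with a further application of Proposition \ref{prop Lip comp}.

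For the first step, I would set
\[
\kappa:=d\Lambda_A^2\|D^2 g\|_{L^\infty(\R^d)}+\mu+\Theta\left(\|Dg\|_{L^\infty(\R^d)}\right)
\]
and introduce the barriers $\check v(t,x):=g(x)+\kappa t$ and $\hat v(t,x):=g(x)-\kappa t$. Since $g\in W^{2,\infty}(\R^d)$ is of class $\D{C}^{1,1}$ with $\Lip(Dg)\leqslant\|D^2 g\|_{L^\infty(\R^d)}$, a standard semijet computation shows that for any $\D{C}^2$ test function $\phi$ subtangent to $\check v$ (resp.\ supertangent to $\hat v$) at a point $(t_0,x_0)\in\Tcyl$, one has $D_x\phi(t_0,x_0)=Dg(x_0)$, $\partial_t\phi(t_0,x_0)=\pm\kappa$, and $D_x^2\phi(t_0,x_0)\leqslant\|D^2g\|_{L^\infty(\R^d)}I$ (resp.\ $\geqslant -\|D^2g\|_{L^\infty(\R^d)}I$). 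Combining this with the bound $\Tr(A(x))\leqslant d\Lambda_A^2$ coming from (A1), the monotonicity of $M\mapsto\Tr(A(x)M)$ on symmetric matrices (valid since $A(x)\geqslant 0$), and the two-sided inequality $-\mu\leqslant H(x,Dg(x))\leqslant\Theta(\|Dg\|_{L^\infty(\R^d)})$ from (H1) (the upper bound using that $\Theta$ is nondecreasing), I would read off that $\check v$ is a viscosity supersolution and $\hat v$ a viscosity subsolution of \eqref{eq parabolic} on $\Tcyl$. An application of Proposition \ref{prop Lip comp}---whose uniform continuity hypothesis on $H$ is granted by (H2), whose growth condition \eqref{hyp 2} is trivial since all functions involved are bounded, and whose $L^\infty$-gradient requirement is met by $Dg$---to the pairs $(u,\check v)$ and $(\hat v,u)$ then yields
\[
|u(t,x)-g(x)|\leqslant\kappa t\qquad\hbox{for all $(t,x)\in\cTcyl$.}
\]

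For the second step, I would fix $h\in(0,T)$ and set $u_h(t,x):=u(t+h,x)$, which by the time-autonomy of \eqref{eq parabolic} is a continuous bounded solution on $(0,T-h)\times\R^d$ with $D_xu_h\in L^\infty$. Since $u+\kappa h$ is likewise a solution on the same strip (adding a constant preserves the equation), and since the first step guarantees $u_h(0,x)-(u(0,x)+\kappa h)=u(h,x)-g(x)-\kappa h\leqslant 0$ on the parabolic boundary $\{0\}\times\R^d$, a second application of Proposition \ref{prop Lip comp} yields $u(t+h,x)-u(t,x)\leqslant\kappa h$ throughout $(0,T-h)\times\R^d$; the symmetric comparison of $u_h$ with $u-\kappa h$ furnishes the reverse inequality, completing the proof. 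The only delicate point in the whole argument is the semijet bound in the first step, needed because $g$ is merely of class $\D{C}^{1,1}$ rather than classically $\D{C}^2$; everything else is a direct application of the comparison principle already at our disposal.
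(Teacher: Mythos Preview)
Your proof is correct and follows essentially the same two-step scheme as the paper: build affine-in-$t$ barriers $g(x)\pm\kappa t$ to get $|u(t,x)-g(x)|\leqslant\kappa t$, then use time-translation invariance together with Proposition \ref{prop Lip comp} to propagate this to arbitrary times. The only cosmetic differences are that the paper compares the shifted solution $u(\cdot+h,\cdot)$ directly with $u$ (rather than with $u\pm\kappa h$) and glosses over the semijet verification for $g\in C^{1,1}$ that you spell out carefully.
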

\begin{proof}
Take a constant $\kappa$ large enough so that  
\[
\kappa> d\,\Lambda^2_A \|D^2 g\|_{L^\infty(\R^d)} + \max\left\{\mu, \Theta\left(\|D g\|_{L^\infty(\R^d)}\right)   \right\}.
\]
Then the functions $u_-(t,x):=g(x)-\kappa t$ and $u_+(t,x):=g(x)+\kappa t$ are, respectively, a bounded Lipschitz continuous sub and supersolution of \eqref{eq parabolic} with $U:=\R^d$. By Proposition \ref{prop Lip comp}, we infer that $u_-(t,x)\leqslant u(t,x)\leqslant u_+(t,x)$ for every $(t,x)\in\cTcyl$. For any fixed $h\in (0,T)$, the function $v(t,x):=u(t+h,x)$ is a bounded continuous solution to \eqref{eq parabolic} in $(0,T-h)\times\R^d$ with initial datum $v(0,\cdot)=u(h,\cdot)$. Furthermore, it is Lipschitz in $\Tcyl$ with respect to $x$, so by Proposition \ref{prop Lip comp} we infer
\[
\|u(t+h,\cdot)-u(t,\cdot)\|_{L^\infty(\R^d)}
\leqslant
\|u(h,\cdot)-u(0,\cdot)\|_{L^\infty(\R^d)}
\leqslant
\kappa\,h,
\]
yielding the claimed Lipschitz continuity of $u$ in $t$. 
\end{proof}

We recall the following crucial Lipschitz estimates for solutions to \eqref{eq parabolic} proved in \cite{AT}. 

\begin{prop}\label{prop Lip estimates}
Assume that $A$ satisfy (A1)--(A2) and  $H$ satisfies (H3)--(H4) with $\mu=+\infty$. Let $u\in\D{C}([0,T]\times\overline B_{r+1})$ be a solution of \eqref{eq parabolic} with $U:=B_{r+1}$ for some $r>0$, satisfying $u(0,\cdot)=g\in {W^{1,\infty}}(\overline B_{r+1})$ and 
\begin{equation*}
 \partial_t u\geqslant -\kappa\qquad\hbox{in $(0,T)\times B_{r+1}$}
\end{equation*}
for some positive constant $\kappa>0$. Then 
\[
 |u(t,x)-u(t,y)|\leqslant K_r|x-y|\qquad\hbox{for all $(t,x),(t,y)\in  (0,T)\times B_{r}$,}
\]
with $K_r>0$ given by 
\begin{equation}\label{def Lip constant}
K_r:=C\left\{\left(\frac{(1+\Lambda_A)^{1/2}\Lambda}{a_{r+1}}\right)^{1/(m-1)}+\left(\frac{M_{r+1}+\kappa}{a_{r+1}}\right)^{1/m}\right\},
\end{equation}
where $C$ is a positive constant only depending on $d$ and $m$.
\end{prop}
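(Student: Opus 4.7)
The plan is a doubling-of-variables argument in the spatial variables, combined with the viscosity interpretation of the one-sided time bound $\partial_t u \geqslant -\kappa$; the latter plays the role that an \emph{a priori} Lipschitz estimate on the initial datum $g$ would otherwise play, since $\|Dg\|_{L^\infty}$ does not appear in $K_r$. Fix a candidate $L>0$ (eventually $L=K_r$) and small parameters $\sigma,\eta>0$, and introduce
\[
\Phi(t,x,y) := u(t,x) - u(t,y) - L\,\phi_\eta(|x-y|) - \sigma\bigl(\chi(x) + \chi(y)\bigr) - \sigma/t
\]
on $(0,T]\times\overline B_{r+1}\times\overline B_{r+1}$, where $\phi_\eta\in \D{C}^2([0,+\infty))$ is a smooth concave approximation of $s\mapsto s$ that agrees with the identity for $s\geqslant\eta$, and $\chi\in \D{C}^2(\overline B_{r+1})$ is non-negative, vanishes on $\overline B_r$, and diverges as $|x|\to(r+1)^-$. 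The $\sigma/t$ term excludes $t=0$ from the supremum, while the $\chi$-penalty localizes $x_0,y_0$ strictly inside $B_{r+1}$. The aim is to show $\sup\Phi\leqslant 0$ for $L=K_r$; the spatial Lipschitz bound on $B_r$ follows by letting $\sigma,\eta\to 0^+$ and restricting to $x,y\in B_r$.

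Suppose for contradiction that $\sup\Phi>0$, so that the supremum is attained at some interior $(t_0,x_0,y_0)$ with $t_0>0$, $x_0,y_0\in B_{r+1}$, and $x_0\neq y_0$ (since $\Phi(t,x,x)<0$). By the parabolic Crandall--Ishii lemma there exist closed semijets $(a_1,p_x,X)\in\overline{\mathcal P}^{2,+}u(t_0,x_0)$ and $(a_2,p_y,Y)\in\overline{\mathcal P}^{2,-}u(t_0,y_0)$ with $a_1-a_2=-\sigma/t_0^2$, spatial gradients
\[
p_x = L\,\hat e + \sigma D\chi(x_0),\qquad p_y = L\,\hat e - \sigma D\chi(y_0),\qquad \hat e:=(x_0-y_0)/|x_0-y_0|,
\]
and matrix bounds that, tested against the columns of $\sigma(x_0),\sigma(y_0)$ and combined with the Lipschitz condition (A2) on $\sigma$, produce the symmetric estimate
\[
\Tr\bigl(A(x_0) X - A(y_0) Y\bigr) \leqslant d\,\Lambda_A^2\,L\,|x_0-y_0| + O(\sigma).
\]
The viscosity meaning of $\partial_t u\geqslant-\kappa$ is that every subjet at $(t_0,y_0)$ has time component $\geqslant-\kappa$; hence $a_2\geqslant-\kappa$ and so $a_1\geqslant-\kappa-\sigma/t_0^2$. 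Inserting this into the subsolution inequality at $(t_0,x_0)$ yields
\[
H(x_0,p_x) \leqslant \Tr\bigl(A(x_0) X\bigr) + \kappa + \sigma/t_0^2.
\]

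The closing step combines this inequality with the supersolution inequality at $(t_0,y_0)$, the matrix estimate above, the growth and Lipschitz hypotheses (H3)--(H4) on $H$, and the near-equalities $|p_x|,|p_y|\approx L$. A careful balance of all contributions produces an inequality of the form
\[
a_{r+1}\,L^m \leqslant C\bigl[(1+\Lambda_A)^{1/2}\,\Lambda\,L + M_{r+1} + \kappa\bigr]
\]
for some $C=C(d,m)$. Solving for $L$ gives $L\leqslant C'\,K_r$ with $K_r$ as in \eqref{def Lip constant}, so that choosing $L$ strictly larger than $C'K_r$ produces the sought contradiction. The principal technical obstacle is precisely this final balancing: the superlinear lower bound $a_{r+1}|p|^m-M_{r+1}$ of $H$ must be played off against the $(1+\Lambda_A)^{1/2}$-type contribution of the Ishii matrix estimate (whose scaling reflects the Lipschitz bound on $\sigma$) and against the constant offset $\kappa+M_{r+1}$, and the Legendre-conjugate exponents $1/(m-1)$ and $1/m$ in \eqref{def Lip constant} are precisely the exponents arising from this balance. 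A direct subtraction of the sub and super inequalities is not obviously sufficient; the calculation requires tracking the $p$-dependence of $H$ more finely via (H3), and for the complete argument I refer to \cite[Proposition 3.5]{AT}.
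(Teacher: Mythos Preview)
The paper does not actually prove this proposition: it is stated as a result ``recalled'' from \cite{AT}, with no argument given beyond the citation. Your proposal ultimately does the same thing, deferring to \cite[Proposition~3.5]{AT} for the complete proof, so in that sense you are aligned with the paper. The doubling-of-variables sketch you provide is a reasonable outline of the mechanism behind that result, though it is not itself a self-contained proof (as you acknowledge); since the paper offers no proof at all, there is nothing further to compare.
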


\section{Existence of  solutions}\label{sez existence}

The purpose of this section is to establish existence of solutions $u\in \D{C}_b(\cTcyl)$ to the equation
\begin{equation}\label{eq global parabolic}
{\partial_t u}-\D{tr}(A(x)D_x^2u)+H(x, D_x u)=0\quad \hbox{in $ (0,T)\times \R^d$,}
\end{equation}
subject to the initial condition $u(0,\cdot)=g\in\D{UC}_b(\R^d)$. We first deal with the uniformly parabolic case and show existence of  classical solutions 
to \eqref{eq global parabolic} when the initial datum is smooth enough, and then proceed to show the result in full generality.

\subsection{The uniformly parabolic case: existence of classical solutions}\label{subsection classical}
In this subsection we will show the existence of a solution $u\in \D{C}^{1,2}(\Tcyl)\cap \D{C}_b(\cTcyl)$ to \eqref{eq global parabolic}
subject to the initial condition $u(0,\cdot)=g\in\D{C}^\infty(\R^d)\cap W^{2,\infty}(\R^d)$ when the diffusion matrix is regular and uniformly positive definite. More precisely, throughout this subsection we will assume, besides (A1)--(A2), the following further assumptions on $A$: 
\begin{itemize}
\item[(A3)]\quad $A\in\D{C}^{1}(\R^d)$;\smallskip
\item[(A4)]\quad there exists a constant $\lambda>0$ such that 
\[
\langle A(x)\xi,\xi\rangle\geqslant \lambda |\xi|^2\qquad\hbox{for every $x,\xi\in\R^d$.}
\]
\end{itemize}
For the Hamiltonian, we will assume conditions (H3)--(H4).

The strategy we are going to implement is the following: we will approximate $A$, $H$ and $g$ with a sequence of diffusion matrices $A_n$, of Hamiltonians $H_n$ and of initial data $g_n$, that are $n\Z^d$--periodic in the $x$--variable and coincide with $A,\,H,\,g$, respectively, for $x$ belonging to a ball of radius $n/2$. The gain in compactness obtained in this way allows us to prove the existence of classical solutions $u_n$ for the approximating Cauchy problems. This is essentially achieved by following the classical approach to parabolic quasilinear equations, based on the use of  Schauder fixed point theorem and on suitable {\em a priori}  $L^\infty$ and H\"older estimates on the gradient of the solutions, see Proposition \ref{prop a priori estimates}. For the $L^\infty$ estimate, we will exploit Proposition \ref{prop Lip estimates}, while the H\"older estimates follow from more classical results. Then we will send $n\to +\infty$: since the functions $(u_n)_n$ are 
equi--bounded and 
locally equi--Lipschitz 
in $\cTcyl$, 
Ascoli--Arzel\`a Theorem, together with the stability properties of the notion of viscosity solution, implies that any accumulation point $u$ of the $(u_n)_n$ is a locally Lipschitz solution of \eqref{eq global parabolic} satisfying the initial condition $u(0,\cdot)=g$ on $\R^d$. The classical parabolic regularity theory (and Proposition \ref{prop Lip comp}) finally yields that such a $u$ is in $\D{C}^{1,2}(\Tcyl)$, hence a classical solution to \eqref{eq global parabolic}.\smallskip  

We proceed to implement the strategy outlined above. To this aim, choose $\cchi\in\D C^\infty(\R^d)$ so that $0\leqslant \cchi \leqslant 1$, $\cchi\equiv 1$ on $B_{1/2}$ and $\cchi\equiv 0$ on $\R^d\setminus B_{3/4}$. For every  $n\in\N$, we set 
\begin{align}%\label{def approximating}
&g_n(x):=g(x)\cchi(x/n)\qquad\qquad\qquad\qquad\qquad\qquad\qquad\qquad\hbox{for $x\in [-n,n]^d$,}\label{def gn}\\
&A_n(x):=A(x)\,\cchi(x/n)+\D{Id}\,(1-\cchi(x/n))\qquad\qquad\qquad\quad\ \hbox{for $x\in [-n,n]^d$,}\nonumber\\
&H_n(x,p):=H(x,p)\,\cchi(x/n)+\Lambda \big(|p|^m+1\big)\,(1-\cchi(x/n))\quad\hbox{for $(x,p)\in [-n,n]^d\times\R^d$,}\nonumber
\end{align}
and we extend them by periodicity to $\R^d$ and $\R^d\times\R^d$, respectively. 
Note that 
\begin{align}
g_n= g,\ A_n= A\quad\hbox{in $B_{n/2}$},\qquad H_n= H\quad&\hbox{in $B_{n/2}\times\R^d$}\label{property2 gn}
\end{align}
It is easily seen that the Hamiltonians $H_n$ satisfy (H3)--(H4), where the constants $a_r,\,M_r$ and $\Lambda$ can be chosen independent of $n$. Also note that, by periodicity, each $H_n$ is uniformly superlinear, i.e. (H4) holds with $\alpha_r=\alpha_n,\,M_r=M_n$ for every $r>0$. 
For each $n\in\N$, we define the quasilinear parabolic operator 
\[
P_n u:=\partial_t u - \D{tr}(A(x)D_x^2u)+H_n(x, D_x u)
\]
%
%In what follows, we will use the notation $E_\tau:=(0,\tau)\times B_n$ with $\tau>0$ and $n\in\N$. we will denote by $\partial_P E_\tau:=\{0\}\times B_n\bigcup \partial B_n\times [0,\tau)$ the parabolic boundary of $E_\tau$.\smallskip 

We start by deriving the {\em a priori}  Lipschitz and H\"older estimates.  

\begin{prop}\label{prop a priori estimates}
Suppose $u\in \D{C}_n^{1,2}([0,\tau]\times\R^d)$ satisfies $P_nu=0$ in $(0,\tau)\times\R^d$, $u(0,\cdot)=\phi$ on $\R^d $ 
with $\phi\in \D{C}_n^{2}(\R^d)$. Let $L>\|D\phi\|_{L^\infty(\R^d)}+\|D^2\phi \|_{L^\infty(\R^d)}$. Then 
\begin{equation*}
\|\partial_t u\|_{L^\infty((0,\tau)\times\R^d)}\leqslant \kappa,\qquad \|D_x u\|_{L^\infty((0,\tau)\times\R^d)}\leqslant K_n,
\end{equation*}
where $\kappa$ is a constant only depending on $L$, $\mu$, $\Lambda_A$, $\Lambda$, $m$, and $K_r$ is the constant given by \eqref{def Lip constant} with $r:=n$. Moreover, there exist  constants $\tilde C$ and $\alpha\in (0,1)$, only depending on $K_n,\,\Lambda,\,\lambda,\,\Lambda_A$ and $L$ (and independent of $\tau>0$, in particular), such that \ $\sum\limits_{i}\,[\partial_{x_i} u]^{(\alpha)}_{(0,\tau)\times\R^d} \leqslant \tilde C$. In particular, 
\begin{align}\label{Holder estimate}
\|u\|_{H^{(1+\alpha)/{2},1+\alpha}((0,\tau)\times\R^d)}\leqslant \kappa\tau+\|\phi\|_{L^\infty(\R^d)}+K_n+\tilde C+\kappa\tau^{\frac{1-\alpha}{2}}. 
\end{align} 
\end{prop}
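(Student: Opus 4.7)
The proof decomposes into three a priori estimates, obtained in order, and a final assembly into \eqref{Holder estimate}: (i) an $L^\infty$ bound on $\partial_t u$; (ii) the spatial Lipschitz bound on $D_x u$ from Proposition \ref{prop Lip estimates}; and (iii) a classical interior H\"older estimate on $D_x u$.

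For (i), I mimic the argument of Proposition \ref{prop t-Lip} in the periodic setting. Taking $\kappa$ large enough so that
\[
\kappa > d\,\Lambda_A^{2}\,L + \max\bigl\{\mu,\ \Lambda(L^{m}+1)\bigr\},
\]
the pointwise bounds $|H_n(x,p)|\leqslant\max\{\mu,\Lambda(|p|^m+1)\}$ (from (H1) and (H4), inherited by $H_n$ from $H$ via the construction) and $|\operatorname{tr}(A_n(x)D^2\phi)|\leqslant d\,\Lambda_A^{2}\,L$ (from (A1) together with the construction of $A_n$) show that $u_\pm(t,x):=\phi(x)\pm\kappa t$ are classical sub and supersolutions of $P_n w = 0$. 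Since $u,u_\pm$ are $n\Z^d$--periodic and hence bounded, condition \eqref{hyp 2} is trivially met and Proposition \ref{prop Lip comp} yields $u_-\leqslant u\leqslant u_+$. The usual translation--in--time argument applied to $u(t+h,\cdot)$ versus $u(t,\cdot)\pm\kappa h$ then upgrades this to $\|\partial_t u\|_{L^\infty((0,\tau)\times\R^d)}\leqslant\kappa$.

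For (ii), with the lower bound $\partial_t u\geqslant -\kappa$ at our disposal, Proposition \ref{prop Lip estimates} applied to $u$ on $(0,\tau)\times B_{n+1}$ with $r:=n$ produces $|D_x u|\leqslant K_n$ on $B_n$; periodicity propagates this to the whole of $\R^d$. The key input is that $H_n$ satisfies (H3)--(H4) with constants $a_r,M_r$ that can be taken independent of $n$ and matching those of $H$ on $B_{n+1}$. For (iii), once the $L^\infty$ bounds on $D_x u$ and $\partial_t u$ are in place, the equation $P_n u = 0$ fits the framework of uniformly parabolic quasilinear equations treated in \cite[Chapter VI]{Lady}: the matrix $A_n$ is uniformly elliptic with constant $\min\{\lambda,1\}$ (by (A4) and the construction) and Lipschitz with constant controlled by $\Lambda_A$, while $H_n(x,D_x u)$ is a bounded source term by (H4) and $K_n$. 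Invoking \cite[Chapter VI, Theorem 1.1]{Lady} produces an exponent $\alpha\in(0,1)$ and a constant $\tilde C$ depending only on $K_n,\Lambda,\lambda,\Lambda_A,L$ (in particular not on $\tau$) such that $\sum_i[\partial_{x_i} u]^{(\alpha)}_{(0,\tau)\times\R^d}\leqslant\tilde C$; periodicity again extends this interior estimate globally.

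It remains to assemble \eqref{Holder estimate}. Integrating the bound on $\partial_t u$ gives $\|u\|_{L^\infty}\leqslant\|\phi\|_{L^\infty}+\kappa\tau$; steps (ii) and (iii) directly control the $\|D_x u\|_{L^\infty}$ and $[D_x u]^{(\alpha)}$ contributions by $K_n$ and $\tilde C$, respectively; and the residual temporal term is handled via $|u(t,x)-u(s,x)|\leqslant\kappa|t-s|\leqslant\kappa\,\tau^{(1-\alpha)/2}\,|t-s|^{(1+\alpha)/2}$, which accounts for the $\kappa\tau^{(1-\alpha)/2}$ contribution. The main delicate point is step (iii): one must be sure that the classical H\"older estimate applies with constants depending only on the listed quantities, uniformly in $n$. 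This is the case because $A_n$ has ellipticity constant $\min\{\lambda,1\}$ independent of $n$ and because $H_n$ satisfies the structural bounds \eqref{h1.1}--\eqref{h1.2} uniformly in $n$, so the H\"older estimate does not degenerate in the limiting procedure carried out in Section \ref{subsection classical}.
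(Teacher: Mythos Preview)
Your proof is correct and follows essentially the same route as the paper's: the time-Lipschitz bound via the barrier/translation argument of Proposition~\ref{prop t-Lip}, the space-Lipschitz bound via Proposition~\ref{prop Lip estimates}, and the H\"older estimate on $D_x u$ via \cite[Chapter~VI, Theorem~1.1]{Lady}, with periodicity used to pass from interior to global bounds. The paper's proof is terser---it just invokes those three results and calls \eqref{Holder estimate} a ``trivial consequence''---whereas you spell out the assembly and the uniformity of the structural constants in $n$, but there is no substantive difference in strategy.
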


\begin{proof}
By periodicity, the solution $u$ is clearly Lipschitz continuous in $(0,\tau)\times\R^d$. The Lipschitz estimates follow at once by  Propositions \ref{prop t-Lip} and \ref{prop Lip estimates}. The H\"older estimates on $D_x u$ can be derived by applying \cite[Chapter VI, Theorem 1.1]{Lady} with $\Omega:=[-2n,2n]^d$, $\Omega':=[-n,n]^d$. The inequality \eqref{Holder estimate} is a trivial consequence of these estimates. 
\end{proof}

We proceed by showing existence of a classical solution for the approximating parabolic Cauchy problems.

\begin{prop}\label{prop global existence}
There exists a function $u\in \D{C}_n^{1,2}(\cTcyl)$ that solves the problem
\begin{equation}\label{periodic parabolic problem}
 P_n u=0\quad\hbox{in $\cTcyl$,}\qquad u(0,\cdot)=g_n\quad\hbox{on $\R^d$.}
\end{equation}
\end{prop}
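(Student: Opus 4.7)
Plan. The proof follows the classical linearization-plus-fixed-point scheme for uniformly parabolic quasilinear equations, using Proposition~\ref{prop a priori estimates} to supply the key a priori estimates.

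Fix $\alpha\in(0,1)$ as provided by Proposition~\ref{prop a priori estimates} and set $X:=H_n^{(1+\alpha)/2,\,1+\alpha}(\QTC)$. For $v\in X$ and $\tau\in[0,1]$, the classical Schauder theory for non-degenerate linear parabolic equations with H\"older coefficients (see \cite[Ch.~IV, Thm.~5.1]{Lady}) furnishes a unique $T_\tau(v)\in H_n^{(2+\alpha)/2,\,2+\alpha}(\QTC)$ solving the linear Cauchy problem
\[
 \partial_t u-\Tr\bigl(A_n(x)D_x^2 u\bigr)=-\tau H_n(x, D_x v)\ \text{ in }\QT,\qquad u(0,\cdot)=g_n,
\]
since $A_n$ is $\D{C}^1$ and uniformly positive definite (being a convex combination of $A\ge\lambda\,\D{Id}$ and of $\D{Id}$), $g_n\in H^{2+\alpha}(\R^d)$ by construction, and the source belongs to $H^{\alpha/2,\alpha}(\QTC)$ in view of (H3)--(H4) and the H\"older regularity of $D_x v$. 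Stability estimates for the linear problem render $T_\tau:X\to X$ jointly continuous in $v$ and $\tau$, while the compact embedding $H_n^{(2+\alpha)/2,\,2+\alpha}\hookrightarrow\hookrightarrow X$ (the next-order analogue of Proposition~\ref{prop precompact}) makes $T_\tau$ compact. Moreover $T_0$ is the constant map whose value is the classical solution of the uniformly parabolic heat-type problem $\partial_t u-\Tr(A_n D_x^2 u)=0$ with datum $g_n$.

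I then invoke the continuation form of the Leray--Schauder theorem applied to the family $\{T_\tau\}_{\tau\in[0,1]}$, which reduces the existence of a fixed point of $T_1$ to establishing a uniform a priori bound: there exists $R>0$ such that $\|u\|_X\le R$ for every $\tau\in[0,1]$ and every $u\in X$ with $u=T_\tau(u)$. Any such $u$ is a classical solution of
\[
 \partial_t u-\Tr(A_n D_x^2 u)+\tau H_n(x, D_x u)=0,\qquad u(0,\cdot)=g_n.
\]
Comparison with affine-in-time barriers and Proposition~\ref{prop t-Lip} produce $\|u\|_\infty$ and $\|\partial_t u\|_\infty$ bounded uniformly in $\tau$, since $\tau H_n$ satisfies (H1)--(H2) with constants dominated by those of $H_n$ and the initial datum $g_n$ is unchanged. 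For $\tau$ bounded away from zero, Proposition~\ref{prop Lip estimates} provides a uniform bound on $\|D_x u\|_\infty$; for $\tau$ close to zero one obtains such a bound by a direct perturbation argument against the (smooth, $\tau$-independent) heat-type solution. The interior H\"older estimate of \cite[Ch.~VI, Thm.~1.1]{Lady} then controls $[D_x u]^{(\alpha)}$, giving $\|u\|_X\le R$.

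The theorem thus yields a fixed point $u=T_1(u)$ in $H_n^{(2+\alpha)/2,\,2+\alpha}(\QTC)$, which is the desired classical solution of \eqref{periodic parabolic problem}. The main technical obstacle is the $\tau$-uniform Lipschitz bound on $D_x u$: a literal application of Proposition~\ref{prop Lip estimates} to the rescaled equation deteriorates as $\tau\to 0^+$, since in \eqref{def Lip constant} the coercivity constant is replaced by $\tau a_r$; the estimate must therefore be patched near $\tau=0$ by comparing $u$ with the smooth heat-type solution of the unperturbed linear equation, which is possible precisely thanks to the uniform ellipticity of $A_n$.
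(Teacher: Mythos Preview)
Your approach is genuinely different from the paper's, and the difference matters. The paper does \emph{not} use a Leray--Schauder homotopy. Instead it proceeds in two steps: first, a local-in-time existence result obtained via the Schauder fixed point theorem applied to the map $J:v\mapsto u$ (with $u$ solving the linear problem with source $H_n(x,D_xv)$) on a closed ball of $H_n^{(1+\alpha)/2,1+\alpha}$, shown to be self-mapping for short times by means of the interpolation inequality of Proposition~\ref{prop parabolic interpolation}; second, a continuation-in-time argument using the a~priori bound \eqref{Holder estimate} of Proposition~\ref{prop a priori estimates} (which is independent of the time horizon) to push the local solution up to $T$. In this scheme the a~priori estimates are only ever invoked for the \emph{actual} equation $P_nu=0$, so the coercivity of $H_n$ is available in full strength.

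Your scheme, by contrast, requires a gradient bound that is uniform in the homotopy parameter $\tau\in[0,1]$ for solutions of $\partial_t u-\Tr(A_nD_x^2u)+\tau H_n(x,D_xu)=0$, and this is where the argument breaks. You correctly identify that Proposition~\ref{prop Lip estimates} applied to $\tau H_n$ gives a bound blowing up like $(\kappa/\tau a_{r+1})^{1/m}$ as $\tau\to 0^+$, since the time-Lipschitz constant $\kappa$ is bounded below by $d\Lambda_A^2\|D^2g_n\|_\infty$ independently of $\tau$. But the proposed ``patching'' by perturbation against the heat-type solution $u_0$ does not close: writing $w=u-u_0$ and using linear estimates yields an inequality of the form $K\le C_1+C_2\tau(K^m+1)$ with $K=\|D_xu\|_\infty$, and for $m>1$ this inequality admits, for any small $\tau>0$, solutions of order $(C_2\tau)^{-1/(m-1)}$ in addition to the small ones near $C_1$. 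Hence it does \emph{not} give an a~priori bound on all fixed points of $T_\tau$, which is precisely what Leray--Schauder requires. A connectedness or continuity argument in $\tau$ cannot be invoked here, since the theorem demands the bound hold for every fixed point at every $\tau$, not only those reachable from $\tau=0$. The paper's Schauder-plus-time-continuation route sidesteps this difficulty entirely by never degrading the coercivity of the Hamiltonian.
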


\begin{proof}
The proof is divided in two steps: we will first prove the local existence, i.e. the existence of a classical solution to \eqref{periodic parabolic problem} in $[0,\tau]\times\R^d$ for some $\tau\in (0,T]$; then we will prove that the maximal $\tau$ for which such a solution exists is equal to $T$. For notational brevity, throughout the proof we will write $\Qtau$ in place of $(0,\tau)\times\R^d$.\smallskip

\noindent{\em Step 1:} let $\tau\in (0,T]$ to be chosen and denote by $\alpha\in (0,1)$ the exponent provided by Proposition 
\ref{prop a priori estimates} with $g_n$ in place of $\phi$ and by $C$ the corresponding constant appearing at the right hand--side of \eqref{Holder estimate}. Set
\[
 \Sol:=\left\{v\in H_n^{{(1+\alpha)}/{2},1+\alpha}(\QtauC)\,:\,\|v\|_{H^{(1+\alpha)/{2},1+\alpha}(\Qtau)}\leqslant 2C\,    \right\}.
\]
Then we define a map $J:\Sol\to H_n^{{(1+\alpha)}/{2},1+\alpha}(\QtauC)$ by $u=Jv$, where $u$ solves the Cauchy problem 
\begin{equation*}\label{eq1 fixed point}
\partial_t u -\D{tr}(A_n(x)D_x^2u)+H_n(x, D_x v)=0\ \ \hbox{in $\QtauC$},\quad u(0,\cdot)=g_n\ \ \hbox{on $\R^d$.} 
\end{equation*}
Note that, by \cite[Chapter IV, Theorem 5.1]{Lady}, for each $v\in\Sol$, this problem has a unique solution $u\in H_n^{{(2+\alpha)}/{2},2+\alpha}(\QtauC)$ satisfying 
\begin{equation}\label{eq2 fixed point}
\|u\|_{H^{(2+\alpha)/{2},1+\alpha}(\Qtau)}\leqslant c\left(\|g_n\|_{H^{2+\alpha}(\Qtau)}+\|H_n(x,D_x v)\|_{H^{\alpha/2,\alpha}(\Qtau)}\right),
\end{equation}
where $c$ is a constant independent of $\tau\in (0,T]$, $g_n$ and $v$. Therefore 
\begin{equation}\label{eq3 fixed point}
\|Jv\|_{H^{(2+\alpha)/{2},2+\alpha}(\Qtau)}\leqslant \Gamma(C)\qquad\hbox{for all $v\in\Sol$,}
\end{equation}
for some continuous nondecreasing function $\Gamma:\R_+\to\R_+$, only depending on $H_n$. We proceed to show that we can choose $\tau\in (0,T]$ small enough so that \ $\|Jv\|_{H^{(1+\alpha)/{2},1+\alpha}(\Qtau)}\leqslant 2C$ \ for all $v\in\Sol$. To this aim, first note that, for $u=Jv$, we have 
\[
\|u\|_{H^{(1+\alpha)/{2},1+\alpha}(\Qtau)}
 \leqslant
 C
 +\|u-g_n\|_{H^{(1+\alpha)/{2},1+\alpha}(\Qtau)}. 
\]
Set $\tilde u:=u-g_n$ and let us estimate the term 
\[
 \|\tilde u\|_{H^{(1+\alpha)/{2},1+\alpha}(\Qtau)}=
 \|\tilde u\|_{L^\infty(\Qtau)}
 +
 \|D_x\tilde u\|_{H^{\alpha/2,\alpha}(\Qtau)}
 +
 [\tilde u]_{t,\Qtau}^{\left(\frac{1+\alpha}{2}\right)}.
\]
From the fact that $\tilde u(0,\cdot)=0$ on $\R^d$ and $\|\partial_t \tilde u\|_{L^\infty(\Qtau)}\leqslant \Gamma(C)$, we get 
\begin{equation*}
\|\tilde u\|_{L^\infty(\Qtau)}\leqslant \Gamma(C)\tau,
\qquad\qquad 
[\tilde u]_{t,\Qtau}^{\left(\frac{1+\alpha}{2}\right)}\leqslant \Gamma(C)\tau^{\frac{1-\alpha}{2}},
\end{equation*}
while the term $\|D_x\tilde u\|_{H^{\alpha/2,\alpha}(\Qtau)}$ can be controlled with the aid of Proposition \ref{prop parabolic interpolation}. We conclude that we can choose $\eps>0$ and a sufficiently small $\tau\in (0,T]$ so that 
\[
\|D_x\tilde u\|_{H^{\alpha/2,\alpha}(\Qtau)}\leqslant \frac{C}{2},
\qquad 
\|\tilde u\|_{L^\infty(\Qtau)}
+
[\tilde u]_{t,\Qtau}^{\left(\frac{1+\alpha}{2}\right)}
\leqslant
\frac{C}{2}.
\]
In particular, $J$ maps $\Sol$ into itself, for such a $\tau$. Since $\Sol$ is a convex and compact subset of the Banach space $H_n^{\alpha/{2},\alpha}(\QtauC)$, see Proposition \ref{prop precompact}, we can apply the Schauder fixed point Theorem, see for instance  \cite[Theorem 8.1]{Lieb}, and derive the existence of a fixed point $u$ of $J$, which is clearly in  $H_n^{{(2+\alpha)}/{2},2+\alpha}(\QtauC)$ and hence solves the Cauchy problem \eqref{periodic parabolic problem} in $[0,\tau]\times\R^d$.\smallskip

\noindent{\em Step 2:} let us set 
\[
T^*:=\sup\{\tau\in (0,T]\,:\,\hbox{\eqref{periodic parabolic problem} admits a solution in $\D{C}_n^{1,2}(\QtauC)$}\,\}.
\]
By the step 1, we know that the above set is nonempty. We want to show that $T^*=T$. To this aim, take a sequence $\big((\tau_k, u_k)\big)_{k}$ in $(0,T^*)\times\D{C}_n^{1,2}(\overline Q_{\tau_k})$, where $(\tau_k)_k$ converges increasingly to  
$T^*$ and $u_k$ solves \eqref{periodic parabolic problem} in $\overline Q_{\tau_k}$. 
From Proposition \ref{prop a priori estimates} we derive that each $u_k$ belongs to $H_n^{(1+\alpha)/{2},1+\alpha}(\overline Q_{\tau_k})$ and that there exist a constant $C>0$ and an exponent $\alpha\in (0,1)$, independent of $k\in\N$, such that  \ $\|u_k\|_{H^{(1+\alpha)/{2},1+\alpha}(Q_{\tau_k})}\leqslant C$ \ \ for every $k\in\N$. By applying \cite[Chapter IV, Theorem 5.1]{Lady} with $f(t,x):=H_n(x,D_xu_k)$ we infer that $u_k$ satisfies \eqref{eq2 fixed point} with $Q_{\tau_k}$ in place of $\Qtau$ and $u_k$ in place of $v$, where $c$ is a constant independent of $k$. We derive  
\begin{equation}\label{eq a priori estimate}
\|u_k\|_{H^{(2+\alpha)/{2},2+\alpha}(Q_{\tau_k})}\leqslant \Gamma(C)\qquad\hbox{for every $k\in\N$.} 
\end{equation}
Also notice that, by Proposition \ref{prop Lip comp},  
\begin{equation}\label{eq well posed}
u_k=u_h\quad\hbox{on $\overline Q_{\tau_h}$\ \ for every $k\geqslant h$.}
\end{equation}
We define a function $u:[0,T^*]\times\R^d\to\R$ by setting \ $u=u_k$\  on $\overline Q_{\tau_k}$\  for every $k\in\N$, and then by taking its continuous extension to $[0,T^*]\times\R^d$. According to \eqref{eq well posed} and \eqref{eq a priori estimate},  $u$ is well defined and belongs to ${H_n^{(2+\alpha)/{2},2+\alpha}(\overline Q_{T^*})}$. Moreover, it solves the Cauchy problem \eqref{periodic parabolic problem} in $[0,T^*)\times\R^d$ by construction, and also on $[0,T^*]\times\R^d$ by continuity of $u$, $\partial_t u$, $D_x u$, $D_x^2 u$. If, by contradiction, $T^*<T$, we could argue as in step 1 to find $\beta\in (0,1)$, $\tau\in (0,T-T^*)$ and a function $w\in H_n^{(1+\beta)/{2},1+\beta}\left([0,\tau]\times\R^d\right)$ such that 
\[
P_n w=0\quad\hbox{in $[0,\tau]\times\R^d$,}\qquad w(0,\cdot)=u(T^*,\cdot)\quad \hbox{on $\R^d$.} 
\]
It is easy to check that the function $u^*$ defined as 
\begin{eqnarray*}
u^*(t,x):=
\begin{cases}
u(t,x) & \hbox{if $(t,x)\in [0,T^*]\times\R^d$,}\\
w(t-T^*,x)& \hbox{if $(t,x)\in [T^*, T^*+\tau]\times\R^d$,}
\end{cases}
\end{eqnarray*}
belongs to $\D{C}_n^{1,2}(\overline Q_{T^*+\tau})$ and solves the Cauchy problem \eqref{periodic parabolic problem} in $\overline Q_{T^*+\tau}$, thus contradicting the maximality of $T^*$.
\end{proof}

We now proceed to prove the announced result.

\begin{teorema}\label{teo global parabolic}
Let $A$ satisfy (A1)--(A4) and let $H$ satisfy (H3)--(H4). Then, for every $g\in\D{C}^\infty(\R^d)\cap W^{2,\infty}(\R^d)$, there exists a classical solution $u\in \D{C}^{1,2}(\Tcyl)\cap \D{C}_b(\cTcyl)$ to \eqref{eq global parabolic} 
subject to the initial condition $u(0,\cdot)=g$ on $\R^d$. Moreover
\begin{equation}\label{claim Lip estimates}
\|\partial_t u\|_{L^\infty(\cTcyl)}\leqslant \kappa,\qquad \|D_x u\|_{L^\infty([0,T]\times B_r)}\leqslant K_r,
\end{equation}
where $\kappa$ is a constant only depending on $\|Dg\|_{L^\infty(\R^d)}$, $\|D^2 g\|_{L^\infty(\R^d)}$, $\mu$, $\Lambda_A$, $\Lambda$, $m$, and $K_r$ is the constant defined in \eqref{def Lip constant}.
\end{teorema}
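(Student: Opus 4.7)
The plan is to implement the approximation scheme outlined in the introduction, passing from the periodic problems of Proposition \ref{prop global existence} to the Cauchy problem on $\R^d$ via compactness and stability of viscosity solutions, and then to bootstrap the regularity using (A3)--(A4). For each $n\in\N$, set up $g_n,\,A_n,\,H_n$ as in \eqref{def gn} and note that, since $\cchi$ is smooth and compactly supported and $g\in W^{2,\infty}(\R^d)$, the chain rule yields a bound
\[
\|Dg_n\|_{L^\infty(\R^d)}+\|D^2 g_n\|_{L^\infty(\R^d)}\leqslant L
\]
for some $L>0$ that is independent of $n\geqslant 1$ and only depends on $\|g\|_{W^{2,\infty}(\R^d)}$ and $\cchi$. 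By Proposition \ref{prop global existence} there exists a classical solution $u_n\in \D{C}_n^{1,2}(\cTcyl)$ of the periodic Cauchy problem \eqref{periodic parabolic problem}.

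Next I would apply Proposition \ref{prop a priori estimates} with $\phi=g_n$ and $\tau=T$. The time estimate gives $\|\partial_t u_n\|_{L^\infty(\Tcyl)}\leqslant\kappa$ with $\kappa$ independent of $n$, since the constant in Proposition \ref{prop t-Lip} only depends on $\|Dg_n\|_\infty$, $\|D^2g_n\|_\infty$, $\mu$, $\Lambda_A$, $\Theta$, all of which are controlled uniformly in $n$. Combining this with \eqref{property2 gn} and Proposition \ref{prop Lip estimates} applied on $B_{r+1}$ for any $r>0$ with $r+1\leqslant n/2$, we also obtain
\[
\|D_x u_n\|_{L^\infty([0,T]\times B_r)}\leqslant K_r\qquad\hbox{for every $n$ with $n>2r+2$,}
\]
with $K_r$ given by \eqref{def Lip constant} and independent of $n$. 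Since $\|u_n\|_{L^\infty(\cTcyl)}\leqslant \|g\|_\infty+\kappa T$ uniformly in $n$, the sequence $(u_n)_n$ is locally equi--bounded and locally equi--Lipschitz on $\cTcyl$, so Ascoli--Arzel\`a provides a subsequence converging locally uniformly to some $u\in \D{C}_b(\cTcyl)$, which inherits the bounds in \eqref{claim Lip estimates}.

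Because $A_n\equiv A$ and $H_n\equiv H$ on $B_{n/2}$, standard stability of viscosity solutions implies that $u$ is a viscosity solution of \eqref{eq global parabolic} in $\Tcyl$; the initial condition $u(0,\cdot)=g$ follows by passing to the limit in $u_n(0,\cdot)=g_n$, which coincides with $g$ on any fixed compact set for $n$ large. To upgrade to a classical solution, I would use the uniform parabolicity (A4) together with the fact that $D_x u$ is locally bounded in $\Tcyl$: for any compactly contained parabolic cylinder $Q'\Subset\Tcyl$, the function $f(t,x):=-H(x,D_x u(t,x))$ is bounded and, by a direct application of \cite[Chapter VI, Theorem 1.1]{Lady} viewed on fixed balls strictly contained in $B_{n/2}$, the approximants $u_n$ enjoy uniform interior $H^{(1+\alpha)/2,1+\alpha}$ bounds, so $f\in H^{\alpha/2,\alpha}(Q'')$ for a slightly larger $Q''$. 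A further application of the linear Schauder theory \cite[Chapter IV, Theorem 5.1]{Lady} then gives $u_n\in H^{(2+\alpha)/2,2+\alpha}$ locally, uniformly in $n$, so $u\in \D{C}^{1,2}(\Tcyl)$ and satisfies the equation pointwise. The bounds in \eqref{claim Lip estimates} are inherited from those for $u_n$ in the limit.

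The main obstacle is ensuring that the interior H\"older regularity of $D_x u_n$ is genuinely uniform in $n$ on a fixed compact set, so that the bootstrap from viscosity solution to classical $\D{C}^{1,2}$ solution is legitimate; this amounts to checking that the constant $\tilde C$ in Proposition \ref{prop a priori estimates}, which depends on $K_n$, can be replaced by one depending only on $K_r$ once we restrict to $[0,T]\times B_r$ and take $n\gg r$. This is achieved by applying the De~Giorgi--type interior estimates of \cite[Chapter VI, Theorem 1.1]{Lady} on balls $B_r\Subset B_{r+1}\Subset B_{n/2}$, where the H\"older exponent and constant depend only on $K_r,\,\Lambda,\,\lambda,\,\Lambda_A$ and $L$.
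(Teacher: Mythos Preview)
Your approximation scheme, the uniform time--Lipschitz bound, the local space--Lipschitz bound via Proposition~\ref{prop Lip estimates} on $B_{r+1}\subset B_{n/2}$, the Ascoli--Arzel\`a compactness step, and the passage to a viscosity solution by stability all coincide with the paper's argument.

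The genuine difference lies in the regularity upgrade from viscosity to classical solution. You propose to bootstrap directly on the approximants: obtain uniform interior $H^{(1+\alpha)/2,1+\alpha}$ bounds for $u_n$ on fixed balls via \cite[Chapter~VI, Theorem~1.1]{Lady}, feed $H(x,D_x u_n)$ as a uniformly H\"older right--hand side into linear interior Schauder estimates, and pass to the limit in $\D C^{1,2}$. The paper takes an alternative, more self--contained route: on each ball $B_r$ it truncates the nonlinearity, replacing $H$ by $f_r\big(H(x,p)\big)$ with a smooth bounded $f_r$ chosen so that $f_r\circ H=H$ for $|p|\leqslant K_r$; then it invokes a classical existence result (\cite[Theorem~12.22]{Lieb}) for this bounded quasilinear problem with boundary data $u$, and identifies the resulting classical solution with $u$ via the comparison principle of Proposition~\ref{prop Lip comp}. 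Your approach is more direct and avoids both the truncation trick and the appeal to \cite{Lieb}, at the cost of tracking the local dependence of the H\"older constants and invoking \emph{interior} Schauder estimates (note that \cite[Chapter~IV, Theorem~5.1]{Lady} as cited is a global estimate; you would need its interior counterpart). The paper's approach sidesteps that bookkeeping entirely by reusing Proposition~\ref{prop Lip comp}, which is already available.
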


\begin{proof}
In view of Proposition \ref{prop global existence}, for each $n\in\N$ there exists a solution 
$u_n\in \D{C}_n^{1,2}(\cTcyl)$  to the problem
\[
 P_n u=0\quad\hbox{in $\cTcyl$,}\qquad u(0,\cdot)=g_n\quad\hbox{on $\R^d$.}
\]
By combining Proposition \ref{prop t-Lip} and Proposition \ref{prop Lip estimates}, we get that the functions 
$u_n$ satisfy the Lipschitz estimates \eqref{claim Lip estimates}, at least eventually for every fixed $r>0$. 
By Ascoli--Arzel\`a Theorem and by possibly extracting a subsequence, we infer that there exists a function $u\in\D{C}(\cTcyl)$ such that $u_n\ucv u$ on $\cTcyl$. Since $H_n\ucv H$ on $\R^d\times\R^d$ and $A_n\ucv A$, $g_n\ucv g$ on $\R^d$, we infer that $u$ is a viscosity solution of \eqref{eq global parabolic} subject to the initial condition $u(0,\cdot)=g$ on $\R^d$. It is clear that $u$ satisfies \eqref{claim Lip estimates}. Being $g$ bounded on $\R^d$, we get in particular that $u$ in bounded on $\cTcyl$. 

Let us prove the asserted regularity of $u$. Let us fix $r>0$ and choose a smooth and bounded function $f_r:\R\to\R$ with $f_r(h)\equiv h$ on $[-z_h,z_h]$, with $z_h$ big enough so that \ $f_r(H(x,p))=H(x,p)$ for every $x\in B_r$ and $|p|\leqslant K_r$. Then $u$ is a viscosity solution of 
\[
{\partial_t u}-\D{tr}(A(x)D_x^2u)+f_r\big(H(x, D_x u)\big)=0\quad \hbox{in $ (0,T)\times B_r$.}
\]
On the other hand, \cite[Theorem 12.22]{Lieb} guarantees the existence of a solution $v\in\D{C}([0,T]\times\overline B_r)\cap\D{C}^{1,2}((0,T)\times B_r)$ satisfying the boundary condition $v=u$ on $\partial_P \big( (0,T)\times B_r\big)$. In view of the Comparison Principle stated in Proposition \ref{prop Lip comp} we infer that $u=v$ on $[0,T]\times\overline B_r$. The proof is complete.
\end{proof}
\smallskip

We end this subsection proving a comparison--type result for solutions to \eqref{eq global parabolic} obtained via approximation through periodic parabolic problems, as described above. 

\begin{prop}\label{prop comparison-type}
Let $A$ satisfy (A1)--(A4), $H$ satisfy (H3)--(H4) and $g^{1},\,g^{2}\in\D{C}^\infty(\R^d)\cap W^{2,\infty}(\R^d)$. Then there exists a pair $u^{1},\,u^2\in \D{C}^{1,2}(\Tcyl)\cap \D{C}_b(\cTcyl)$ of classical solutions to \eqref{eq global parabolic} subject to the initial condition $u^{i}(0,\cdot)=g^{i}$ on $\R^d$,  $i\in\{1,2\}$, satisfying 
\[
\|u^1-u^2\|_{L^\infty(\cTcyl)}\leqslant \|g^1-g^2\|_{L^\infty(\R^d)}.
\]
\end{prop}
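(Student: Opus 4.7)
The plan is to construct both solutions simultaneously as limits of the periodic approximations introduced in the proof of Theorem \ref{teo global parabolic}, and then to exploit the comparison principle at the periodic level, where the diffusion matrix is uniformly parabolic and all the relevant estimates are available.

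First, for each $n\in\N$ and each $i\in\{1,2\}$, I would define $g^i_n$ as in \eqref{def gn}, together with the associated $A_n$ and $H_n$. Proposition \ref{prop global existence} produces classical solutions $u^i_n\in\D{C}_n^{1,2}(\cTcyl)$ to $P_n u^i_n = 0$ in $\cTcyl$ with initial datum $g^i_n$, while Propositions \ref{prop t-Lip}, \ref{prop Lip estimates} and \ref{prop a priori estimates} give bounds on $\partial_t u^i_n$ and on $D_x u^i_n$ on $[0,T]\times B_r$ that are independent of $n$, eventually in $n$, for every fixed $r>0$.

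Second, I would apply Proposition \ref{prop Lip comp} to the pair $(u^1_n,u^2_n)$ on $(0,T)\times\R^d$. Each $u^i_n$ is smooth and $n\Z^d$--periodic, hence bounded and globally Lipschitz in $x$; moreover $H_n\in\D{UC}(\R^d\times B_r)$ for every $r>0$ thanks to periodicity. Since $U=\R^d$, the parabolic boundary of $\TUcyl$ reduces to $\{0\}\times\R^d$, so that invoking the proposition with $(v,w)=(u^1_n,u^2_n)$ and then with the roles reversed yields
\[
\|u^1_n - u^2_n\|_{L^\infty(\cTcyl)} \leqslant \|g^1_n - g^2_n\|_{L^\infty(\R^d)}.
\]
Because $g^i_n(x)=g^i(x)\,\cchi(x/n)$ with $0\leqslant\cchi\leqslant 1$, the right-hand side is in turn dominated by $\|g^1 - g^2\|_{L^\infty(\R^d)}$.

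Finally, by a diagonal application of Ascoli--Arzel\`a based on the uniform local Lipschitz bounds, I would extract a single subsequence (not relabelled) along which $u^1_n\ucv u^1$ and $u^2_n\ucv u^2$ on $\cTcyl$. As in the last part of the proof of Theorem \ref{teo global parabolic}, the stability of viscosity solutions combined with the classical parabolic regularity argument of that proof shows that both $u^i$ lie in $\D{C}^{1,2}(\Tcyl)\cap\D{C}_b(\cTcyl)$ and solve \eqref{eq global parabolic} with initial datum $g^i$. Passing to the limit in the displayed inequality delivers the claim. The only subtle point is ensuring that the two sequences are extracted along a common set of indices $n$, which is a routine diagonal extraction; I do not foresee any real obstacle, since the periodic-level comparison bound is uniform in $n$ and survives locally uniform convergence.
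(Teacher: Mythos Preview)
Your proposal is correct and follows essentially the same approach as the paper: construct the periodic approximations $u^i_n$ via Proposition \ref{prop global existence}, apply Proposition \ref{prop Lip comp} at the periodic level to obtain $\|u^1_n-u^2_n\|_{L^\infty}\leqslant\|g^1_n-g^2_n\|_{L^\infty}\leqslant\|g^1-g^2\|_{L^\infty}$, then pass to the limit along a common subsequence as in the proof of Theorem \ref{teo global parabolic}. The only difference is that you spell out the diagonal extraction and the verification that $H_n\in\D{UC}(\R^d\times B_r)$ a bit more explicitly than the paper does.
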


\begin{proof}
For $i\in\{1,2\}$, let $g^i_n$ be the $n\Z^d$--periodic function on $\R^d$ defined via \eqref{def gn} and denote by $u^i_n\in \D{C}_n^{1,2}(\cTcyl)$  the solution to the problem
\[
 P_n u=0\quad\hbox{in $\cTcyl$,}\qquad u(0,\cdot)=g_n\quad\hbox{on $\R^d$}
\]
obtained according to Proposition \ref{prop global existence}. The functions $u^i_n$ are Lipschitz continuous on $\cTcyl$, hence, in view of Proposition \ref{prop Lip comp}, we infer 
\begin{equation}\label{eq comparison-type}
\|u_n^1-u_n^2\|_{L^\infty(\cTcyl)}\leqslant \|g^1_n-g^2_n\|_{L^\infty(\R^d)}\leqslant  \|g^1-g^2\|_{L^\infty(\R^d)}\quad\hbox{for each $n\in\N$.} 
\end{equation}
According to the proof of Theorem \ref{teo global parabolic}, there exists a pair $u^{1},\,u^2$ of bounded and continuous classical solutions to \eqref{eq global parabolic} subject to the initial condition $u^{i}(0,\cdot)=g^{i}$ on $\R^d$ such that, up to subsequences, \ $u_n^i\ucv u^i$\ in  $\cTcyl$ for $i\in\{1,2\}$.
The assertion follows by passing to the limit with respect to $n$ in \eqref{eq comparison-type}.
\end{proof}

\subsection{General existence results}\label{subsection general existence}

In this subsection we will prove existence of solutions to \eqref{eq global parabolic}, where we drop the regularity and uniform positivity conditions on the diffusion matrix, i.e. we will assume conditions (A1)--(A2) only.

\begin{teorema}\label{teo general existence}
Let $A$ satisfy (A1)--(A2) and let $H$ satisfy (H3)--(H4). Then, for every $g\in\D{UC}_b(\R^d)$, there exists a solution $u\in \D{C}_b(\cTcyl)$
to the equation \eqref{eq global parabolic} 
% \begin{equation}\label{claim main global parabolic}
% {\partial_t u}-\D{tr}(A(x)D_x^2u)+H(x, D_x u)=0\quad \hbox{in $ (0,T)\times \R^d$}
% \end{equation}
subject to the initial condition $u(0,\cdot)=g$ on $\R^d$. If $g\in\D{C}^\infty(\R^d)\cap W^{2,\infty}(\R^d)$, $u$ also satisfies
\begin{equation}\label{claim main Lip estimates}
\|\partial_t u\|_{L^\infty(\cTcyl)}\leqslant \kappa,\qquad \|D_x u\|_{L^\infty([0,T]\times B_r)}\leqslant K_r,
\end{equation}
where $\kappa$ is a constant only depending on $\|Dg\|_{L^\infty(\R^d)}$, $\|D^2 g\|_{L^\infty(\R^d)}$,  $\mu$, $\Lambda_A$, $\Lambda$, $m$, and $K_r$ is the constant defined in \eqref{def Lip constant}.
\end{teorema}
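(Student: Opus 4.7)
\emph{Proof plan.} The approach is a two--tier approximation that reduces matters to Theorem \ref{teo global parabolic}: first I would relax the regularity and uniform positivity hypotheses on $A$ while keeping the initial datum smooth, and then I would extend from smooth to general $\D{UC}_b$ initial data via $L^\infty$--density.

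For the smooth--initial--datum step, assume $g\in\D{C}^\infty(\R^d)\cap W^{2,\infty}(\R^d)$. Let $\rho_\eps$ be a standard mollifier, set $\sigma_\eps:=\sigma*\rho_\eps$, and define $A_\eps:=\sigma_\eps\sigma_\eps^T+\eps\,\D{Id}$; factoring $A_\eps=\tilde\sigma_\eps\tilde\sigma_\eps^T$ with the $d\times(n+d)$ matrix $\tilde\sigma_\eps:=(\sigma_\eps,\sqrt{\eps}\,\D{Id})$, one checks that $A_\eps$ satisfies (A1)--(A4) with $\tilde\Lambda_A\leqslant\Lambda_A+\sqrt{d}$ independently of $\eps$ and ellipticity constant $\lambda_\eps=\eps$. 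Theorem \ref{teo global parabolic} then produces a classical solution $u_\eps\in\D{C}^{1,2}(\Tcyl)\cap\D{C}_b(\cTcyl)$ of the $A_\eps$--equation with $u_\eps(0,\cdot)=g$, satisfying the bounds \eqref{claim Lip estimates} with constants $\kappa,K_r$ depending only on $\tilde\Lambda_A$, $\Lambda$, $m$, $\mu$, $\|Dg\|_{L^\infty(\R^d)}$, $\|D^2 g\|_{L^\infty(\R^d)}$ and on $a_{r+1},M_{r+1}$, hence uniform in $\eps$. Since $A_\eps\to A$ uniformly on $\R^d$ and $(u_\eps)$ is locally equi--Lipschitz on $\cTcyl$, an Ascoli--Arzel\`a extraction together with the standard viscosity stability theorem gives a subsequential locally uniform limit $u\in\D{C}_b(\cTcyl)$ solving \eqref{eq global parabolic} with $u(0,\cdot)=g$ and satisfying \eqref{claim main Lip estimates}.

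For the general case, I would invoke Lemma \ref{lemma density} to pick $(g_n)\subset\D{C}^\infty(\R^d)\cap W^{2,\infty}(\R^d)$ with $\|g_n-g\|_{L^\infty(\R^d)}\to 0$, and apply the previous step to produce a solution $u_n$ with $u_n(0,\cdot)=g_n$ for every $n$. The remaining task is to show that $(u_n)$ is Cauchy in $\D{C}_b(\cTcyl)$, which reduces to establishing the pointwise bound
\[
|u_n(t,x)-u_m(t,x)|\leqslant\|g_n-g_m\|_{L^\infty(\R^d)}\qquad\text{for all $(t,x)\in\cTcyl$ and $n,m\in\N$.}
\]
Under (A1)--(A2) alone this cannot be obtained directly from Proposition \ref{prop Lip comp}, since $u_n,u_m$ need only be locally Lipschitz on $\cTcyl$. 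Instead I would apply Proposition \ref{prop comparison-type} at the regularized level: for each $\eps$, it supplies solutions $u_n^\eps,u_m^\eps$ of the $A_\eps$--problem with initial data $g_n,g_m$ satisfying the above inequality, which is preserved under the locally uniform limit $\eps\to 0$. The limit $u:=\lim_n u_n\in\D{C}_b(\cTcyl)$ is then a viscosity solution of \eqref{eq global parabolic} with $u(0,\cdot)=g$ by uniform convergence and standard stability.

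The main obstacle is the compatibility of the pairwise comparisons across the full sequence $(u_n)$. Proposition \ref{prop comparison-type} is stated for a single pair of initial data and its proof constructs the solutions as joint limits of periodic approximants; to work with the entire sequence one has to implement a diagonal subsequence argument, extracting simultaneously in the period parameter of Proposition \ref{prop global existence} and in the regularization parameter $\eps$, so as to obtain, at each level of approximation, a single coherent family $(u_n^\eps)_n$ for which \emph{all} pairwise comparisons hold. The argument is viable because the periodic approximants are globally Lipschitz, which makes Proposition \ref{prop Lip comp} applicable at that level; no comparison principle is available for \eqref{eq global parabolic} at the full generality of hypotheses (A1)--(A2) and (H3)--(H4), which is precisely what this bookkeeping circumvents.
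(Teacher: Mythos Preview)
Your proposal is correct and follows essentially the same two--step approximation as the paper's proof: regularize the diffusion (to invoke Theorem~\ref{teo global parabolic}) and then pass from smooth to $\D{UC}_b$ initial data via the $L^\infty$ contraction inherited from Proposition~\ref{prop comparison-type} through a diagonal extraction. The only cosmetic difference is that the paper mollifies $A$ directly and adds $n^{-2}\D{Id}$, whereas you mollify $\sigma$ and add $\eps\,\D{Id}$; your choice makes the control of the constant $\tilde\Lambda_A$ in (A1)--(A2) more transparent, and your explicit identification of the diagonal bookkeeping needed to make all pairwise comparisons coherent is exactly the point the paper leaves implicit.
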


\begin{proof}
Let us first assume that $g\in\D{C}^\infty(\R^d)\cap W^{2,\infty}(\R^d)$. We introduce a sequence $(\rho_n)_n$  of standard mollifiers  and for each $n\in\N$ we set 
\[
\widetilde A_n(x):=\frac{1}{n^2}\D{Id}+(\rho_n* A)(x),\qquad x\in\R^d.
\]
In view of Theorem \ref{teo global parabolic}, for every $n\in\N$ there exists a classical solution $u_n\in \D{C}^{1,2}(\Tcyl)\cap \D{C}_b(\cTcyl)$ to the equation \eqref{eq global parabolic} with $\widetilde A_n$ in place of $A$ and subject to the initial condition $u_n(0,\cdot)=g$ on $\R^d$. Moreover this family of solutions satisfy \eqref{claim main Lip estimates} for some constants $\kappa$ and $K_r$ independent of $n$ (notice that $\Lambda_{\widetilde A_n}\leqslant \Lambda_{A}+1/n$). 
By Ascoli--Arzel\`a Theorem and by possibly extracting a subsequence, we infer that there exists a function $u\in\D{C}_b(\cTcyl)$ such that $u_n\ucv u$ on $\cTcyl$. Since $\widetilde A_n\ucv A$ on $\R^d$, we infer that $u$ is a viscosity solution of \eqref{eq global parabolic} subject to the initial condition $u(0,\cdot)=g$ on $\R^d$. It is clear that $u$ satisfies \eqref{claim main Lip estimates}.  

Let us now assume that $g\in\D{UC}_b(\R^d)$. Choose a sequence $(g^k)_k$ of initial data in $\D{C}^\infty(\R^d)\cap W^{2,\infty}(\R^d)$ such that \ $\|g-g^k\|_{L^\infty(\R^d)}\to 0$\  as $k\to +\infty$ and let us denote by $u^k$ a solution to \eqref{eq global parabolic} with initial datum $g^k$ obtained via the procedure described in the previous step. According to Proposition \ref{prop comparison-type} and by using a diagonal argument, this can be done in such a way that 
\[
\|u^k-u^h\|_{L^\infty(\cTcyl)}\leqslant \|g^k-g^h\|_{L^\infty(\R^d)}\qquad\hbox{for every $k, h\in\N$.}
\]
From the fact that $(g^k)_k$ is a converging sequence in $\D{C}_b(\R^d)$, we infer that $(u^k)_k$ is a Cauchy sequence in $\D{C}_b(\cTcyl)$. Therefore the solutions $u^k$ converge to some $u$ in $\D C_b(\cTcyl)$ and by stability we conclude that $u$ is a solution of \eqref{eq global parabolic} with initial datum $g$. 
\end{proof}

\section{Comparison Principles}\label{sez comparison}

In this section we are concerned with uniqueness properties of the solutions provided in the previous section, at least in the class of continuous bounded functions in cylinders of the form $\cTcyl$. This will be obtained as a consequence of the  comparison principles we will prove below.

\subsection{Uniformly superlinear Hamiltonians}\label{subsection uniformly superlinear}
In this subsection, we will deal with Hamiltonians satisfying (H3)--(H4) that are uniformly superlinear, i.e. for which (H4) holds with constants $a_r,\,M_r$ independent of $r>0$. 
In this case, the solutions to \eqref{eq global parabolic} with initial datum in $\D{C}^\infty(\R^d)\cap W^{2,\infty}(\R^d)$ provided by Theorem \ref{teo general existence} are globally Lipschitz in $\cTcyl$. Moreover, such Hamiltonians satisfy condition (H2) as well. We can therefore apply Proposition \ref{prop Lip comp} and, by exploiting the density of such initial data in $\D{UC}_b(\R^d)$, we can easily derive the following general Comparison Principle: 

\begin{teorema}\label{teo comp}
Assume $A$ satisfies (A1)--(A2) and  $H$ satisfies (H3)--(H4), with constants $a_r,\,M_r$ independent of $r>0$. Let $v\in\D{USC}([0,T]\times\R^d)$ and $w\in\D{LSC}([0,T]\times\R^d)$ be, respectively, a sub and a supersolution of \eqref{eq global parabolic} satisfying 
\begin{equation*}
\limsup_{|x|\to +\infty}\sup_{t\in [0,T]}\frac{v(t,x)}{1+|x|}\leqslant 0 \leqslant \liminf_{|x|\to +\infty}\inf_{t\in [0,T]}\frac{w(t,x)}{1+|x|}.
\end{equation*}
Let us furthermore assume that $v(0,\cdot)\leqslant g \leqslant w(0,\cdot)$ for some $g\in\D{UC}_b(\R^d)$. Then \quad   
\[
v(t,x)\leqslant w(t,x)\qquad\hbox{for every  $(t,x)\in\cTcyl$.} 
\]
\end{teorema}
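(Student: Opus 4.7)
The plan is to bridge the gap between the comparison principle in Proposition \ref{prop Lip comp}, which requires one of the competitors to be globally Lipschitz in $x$, and the general setting of Theorem \ref{teo comp} where only semicontinuity is assumed, by sandwiching $v$ and $w$ between smooth Lipschitz solutions with the same initial datum (up to a small error), obtained from Theorem \ref{teo general existence}.

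First I would use Lemma \ref{lemma density} to pick a sequence $(g_k)_k \subset \D{C}^\infty(\R^d)\cap W^{2,\infty}(\R^d)$ with $\eps_k := \|g-g_k\|_{L^\infty(\R^d)} \to 0$. For each $k$, Theorem \ref{teo general existence} provides a solution $u_k \in \D{C}_b(\cTcyl)$ to \eqref{eq global parabolic} with $u_k(0,\cdot) = g_k$ and satisfying the estimate \eqref{claim main Lip estimates}. The key point is that, since $H$ is uniformly superlinear, the constants $a_r, M_r$ can be chosen independently of $r$, so the constant $K_r$ in \eqref{def Lip constant} is in fact independent of $r$; together with the time-Lipschitz bound, this makes each $u_k$ globally Lipschitz on $\cTcyl$. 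I would also observe that (H3) together with the uniform form of \eqref{h1.2} implies $H \in \D{UC}(\R^d \times B_r)$ for every $r>0$, so the hypothesis of Proposition \ref{prop Lip comp} on the Hamiltonian is met.

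Next, since the equation has no explicit dependence on $u$, the translated functions $u_k + \eps_k$ and $u_k - \eps_k$ remain classical solutions and are still globally Lipschitz in $x$. By construction,
\[
  v(0,\cdot) \leqslant g \leqslant g_k + \eps_k = (u_k+\eps_k)(0,\cdot), \qquad (u_k-\eps_k)(0,\cdot) = g_k - \eps_k \leqslant g \leqslant w(0,\cdot).
\]
The growth condition \eqref{hyp 2} holds on the pair $(v,\,u_k+\eps_k)$, as $v$ is sublinear by assumption and $u_k+\eps_k$ is bounded, and analogously for $(u_k - \eps_k, w)$; moreover, the parabolic boundary of $(0,T)\times\R^d$ reduces to $\{0\}\times\R^d$. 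Applying Proposition \ref{prop Lip comp} to each pair yields
\[
  v(t,x) \leqslant u_k(t,x) + \eps_k, \qquad u_k(t,x) - \eps_k \leqslant w(t,x) \qquad \text{on } \cTcyl.
\]

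Subtracting these inequalities gives $v(t,x) - w(t,x) \leqslant 2\eps_k$ uniformly on $\cTcyl$; letting $k\to+\infty$ concludes the proof. There is no real obstacle here: the whole point of isolating the uniformly superlinear case is that the solutions constructed in Theorem \ref{teo general existence} are globally Lipschitz, which makes them directly admissible in Proposition \ref{prop Lip comp}, and the density of smooth bounded initial data in $\D{UC}_b(\R^d)$ takes care of everything else. The only subtlety to double-check is that the uniform version of (H4) genuinely yields (H2), so that Proposition \ref{prop Lip comp} is applicable with this $H$.
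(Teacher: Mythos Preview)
Your proposal is correct and follows essentially the same route as the paper: approximate $g$ by smooth $W^{2,\infty}$ data via Lemma \ref{lemma density}, invoke Theorem \ref{teo general existence} to produce globally Lipschitz solutions (this is where uniform superlinearity enters, making $K_r$ independent of $r$), and then sandwich $v$ and $w$ using Proposition \ref{prop Lip comp}. The paper packages this with a single parameter $\eps$---choosing $g_\eps$ between $g$ and $g+\eps$ and comparing with $w+\eps$---whereas you use a sequence $g_k$ and shift the solution by $\pm\eps_k$; the two implementations are equivalent.
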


\begin{proof}
Fix $\eps>0$ and set $w_\eps:=w+\eps$. Since 
$v(0,\cdot)\leqslant g<g+\eps\leqslant w_\eps(0,\cdot)$, in view of Lemma \ref{lemma density} we can find a function $g_\eps\in \D{C}^\infty(\R^d)\cap W^{2,\infty}(\R^d)$ such that $v(0,\cdot)\leqslant g_\eps \leqslant w_\eps(0,\cdot)$ on $\R^d$. By Theorem \ref{teo general existence} and by taking into account that $H$ is uniformly superlinear, there exists a solution $u_\eps\in \D{C}(\cTcyl)\cap W^{1,\infty}(\cTcyl)$ of \eqref{eq global parabolic} with initial datum $g_\eps$. Since $H$ satisfies (H2), we can apply the Comparison Principle stated in Proposition \ref{prop Lip comp} with $U:=\R^d$ to infer that \quad $v\leqslant u_\eps \leqslant w_\eps=w+\eps$ \ in $\cTcyl$. The assertion follows since $\eps>0$ was arbitrarily chosen.
\end{proof}

As a simple consequence of Theorems \ref{teo global parabolic} and \ref{teo comp} we derive the following result:

\begin{teorema}
Let $A$ satisfy (A1)--(A2) and let $H$ satisfy (H3)--(H4), with constants $a_r,\,M_r$ independent of $r>0$. Then, for every $g\in\D{UC}_b(\R^d)$, there exists a unique function $u\in \D{UC}_b(\cTcyl)$ that solves the equation \eqref{eq global parabolic}
% \begin{equation}\label{eq global parabolic}
% {\partial_t u}-\D{tr}(A(x)D_x^2u)+H(x, D_x u)=0\quad \hbox{in $ (0,T)\times \R^d$}
% \end{equation}
subject to the initial condition $u(0,\cdot)=g$ on $\R^d$. If $g\in W^{2,\infty}(\R^d)$, $u$ is Lipschitz continuous in $\cTcyl$ and satisfies
\begin{equation*}
\|\partial_t u\|_{L^\infty(\cTcyl)}\leqslant \kappa,\qquad \|D_x u\|_{L^\infty([0,T]\times \R^d)}\leqslant K,
\end{equation*}
where $\kappa$ is a constant only depending on $\|Dg\|_{L^\infty(\R^d)}$, $\|D^2 g\|_{L^\infty(\R^d)}$,  $\mu$, $\Lambda_A$, $\Lambda$, $m$, and $K$ is the constant, independent of $r>0$, defined in \eqref{def Lip constant}.
\end{teorema}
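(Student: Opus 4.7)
The statement is essentially a consolidation of the existence Theorem~\ref{teo general existence} and the comparison Theorem~\ref{teo comp}: what remains to verify is uniqueness, the promotion of the local Lipschitz estimate to a global one when $g \in W^{2,\infty}(\R^d)$, and the fact that the solution is not merely continuous but uniformly continuous on $\cTcyl$.

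\textbf{Uniqueness.} If $u_1, u_2 \in \D{C}_b(\cTcyl)$ are two bounded continuous solutions sharing the same initial datum $g$, then each of them is simultaneously a USC subsolution and a LSC supersolution, and the fact that they are bounded makes the sublinear growth hypotheses in Theorem~\ref{teo comp} trivially satisfied. Two applications of that comparison principle, exchanging the roles of $v$ and $w$, give $u_1 \equiv u_2$ on $\cTcyl$.

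\textbf{Global Lipschitz regularity for $g \in W^{2,\infty}(\R^d)$.} Such a $g$ is automatically Lipschitz, hence uniformly continuous on $\R^d$. I would regularize it by a standard mollification $g^n := \rho_n * g \in \D{C}^\infty(\R^d) \cap W^{2,\infty}(\R^d)$, which satisfies $\|Dg^n\|_{L^\infty} \leqslant \|Dg\|_{L^\infty}$, $\|D^2 g^n\|_{L^\infty} \leqslant \|D^2 g\|_{L^\infty}$, and $g^n \to g$ uniformly. Theorem~\ref{teo general existence} supplies solutions $u^n$ satisfying \eqref{claim main Lip estimates} with constants $\kappa, K_r$ independent of $n$; the key observation specific to the present setting is that since $a_{r+1}, M_{r+1}$ in \eqref{def Lip constant} do not depend on $r$ in the uniformly superlinear regime, the constant $K_r \equiv K$ is itself independent of $r$. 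Thus each $u^n$ is globally Lipschitz on $\cTcyl$ with the same constants. Applying Theorem~\ref{teo comp} to $u^n$ and $u^h \pm \|g^n - g^h\|_{L^\infty(\R^d)}$ gives $\|u^n - u^h\|_{L^\infty(\cTcyl)} \leqslant \|g^n - g^h\|_{L^\infty(\R^d)}$, so the sequence is Cauchy in $\D{C}_b(\cTcyl)$; the uniform limit inherits the Lipschitz bounds, solves \eqref{eq global parabolic} with datum $g$ by stability of viscosity solutions, and coincides with $u$ by the uniqueness just proved.

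\textbf{The general case $g \in \D{UC}_b(\R^d)$.} By Lemma~\ref{lemma density} I pick $g^k \in \D{C}^\infty(\R^d) \cap W^{2,\infty}(\R^d)$ with $\|g - g^k\|_{L^\infty(\R^d)} \to 0$ and, by the previous step, associated globally Lipschitz solutions $u^k$. The same comparison-principle argument yields $\|u^k - u^h\|_{L^\infty(\cTcyl)} \leqslant \|g^k - g^h\|_{L^\infty(\R^d)}$, so $(u^k)$ is Cauchy in $\D{C}_b(\cTcyl)$; since uniform continuity is preserved under uniform limits, the limit belongs to $\D{UC}_b(\cTcyl)$ and, again by stability, is a solution with initial datum $g$. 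Uniqueness then forces it to be the unique solution in $\D{C}_b(\cTcyl)$, and hence in $\D{UC}_b(\cTcyl)$. The only genuine content beyond invoking Theorems~\ref{teo general existence} and~\ref{teo comp} is the remark that under uniform superlinearity the local constant $K_r$ of \eqref{def Lip constant} is in fact independent of $r$; no other obstacle is expected.
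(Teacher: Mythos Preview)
Your argument is correct, and for the uniqueness part and the general $g\in\D{UC}_b(\R^d)$ part it matches the paper exactly. The one place where you take a different route is the case $g\in W^{2,\infty}(\R^d)$: you mollify $g$ to $g^n\in\D{C}^\infty(\R^d)\cap W^{2,\infty}(\R^d)$, invoke the Lipschitz estimates of Theorem~\ref{teo general existence} for each $u^n$, and pass to the limit. The paper instead works directly with the unique solution $u$: it re-runs the argument of Proposition~\ref{prop t-Lip} but replaces Proposition~\ref{prop Lip comp} (which would need a Lipschitz hypothesis on $u$ that is not yet available) by the stronger comparison Theorem~\ref{teo comp}, thereby obtaining the bound on $\partial_t u$ directly; the bound on $D_x u$ then follows from Proposition~\ref{prop Lip estimates}. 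The paper's route is more economical, avoiding a second approximation step and yielding the Lipschitz bound for $u$ itself without a limiting procedure; your route, on the other hand, sidesteps the need to revisit and adapt the proof of Proposition~\ref{prop t-Lip}, at the cost of running essentially the same approximation argument twice.
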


\begin{proof}
The uniqueness part is obvious in view of Theorem \ref{teo comp}. 
Let $g\in W^{2,\infty}(\R^d)$ and denote by $u$ the unique function in $\D{C}_b(\cTcyl)$ that solves \eqref{eq global parabolic}. The fact that $\|\partial_t u\|_{L^\infty(\cTcyl)}\leqslant \kappa$ for a constant $\kappa$ only depending on $\|Dg\|_{L^\infty(\R^d)}$, $\|D^2 g\|_{L^\infty(\R^d)}$,  $\mu$, $\Lambda_A$, $\Lambda$, $m$ is derived by arguing as in the proof of Proposition \ref{prop t-Lip} and by using Theorem \ref{teo comp} in place of Proposition \ref{prop Lip comp}. The $L^\infty$--bound on $D_xu$ follows by applying Proposition \ref{prop Lip estimates}.

Let us now assume that $g\in\D{UC}_b(\R^d)$. Choose a sequence $(g^k)_k$ of initial data in $W^{2,\infty}(\R^d)$ such that \ $\|g-g^k\|_{L^\infty(\R^d)}\to 0$\  as $k\to +\infty$ and denote by $u$, $u^k$ the unique solution to \eqref{eq global parabolic} in  $\D{C}_b(\cTcyl)$ with initial datum $g$, $g^k$, respectively. By Theorem \ref{teo comp} 
\[
\|u-u^k\|_{L^\infty(\cTcyl)}\leqslant \|g-g^k\|_{L^\infty(\R^d)}\qquad\hbox{for every $k\in\N$.}
\]
As a uniform limit of a sequence of Lipschitz functions, we conclude that $u\in \D{UC}_b(\cTcyl)$.
\end{proof}

\subsection{Non--uniformly superlinear Hamiltonians}\label{subsection nonuniformly superlinear} 
When the Hamiltonian $H$ is not uniformly superlinear, i.e. the constants $a_r, M_r$ in (H4) actually depend on $r>0$, Theorem \ref{teo general existence} provides us with solutions to \eqref{eq global parabolic} that are, in the best case scenario, only locally Lipschitz in $\cTcyl$ and the idea exploited in the previous subsection can no longer be used. We will therefore restrict our analysis to Hamiltonians of special form, by slightly relaxing the convexity condition in $p$ assumed in \cite{AT}. The results of this subsection are based on a technical refinement of the arguments therein employed.

It is convenient to introduce a piece of notation first. Let $m>1$, $\Lambda>0$, $(a_r)_{r>0}$ in $(0,1]$ and $(M_r)_{r>0}$ in $[1,+\infty)$ be fixed constants.
We will denote by $\Bam$ the family of Borel functions $F:\R^d\times\R^d\to\R$ that are convex in $p$ and satisfy (H3) and condition \eqref{h1.1} in (H4) with $\mu=+\infty$, and by $\Ham$ the family of Hamiltonians $H:\R^d\times\R^d\to\R$ satisfying conditions (H3) and (H4) with $\mu=+\infty$. Note that we are not 
assuming neither condition (H2) nor that $H$ is bounded from below.

We consider a Hamiltonian $H\in\Ham$ of the form
\begin{equation}\label{def H}
H(x,p)=\inf_{i\in\I} H_{i}(x,p),\qquad\hbox{for all $(x,p)\in\R^d\times\R^d$,}
\end{equation}
where $\I$ is a set of indexes and each $H_{i}$ belongs to $\Bami$, with exponent $m_i>1$ possibly depending on $i$. 
Notice that\ $m=\inf_{i} m_i$.

\begin{teorema}\label{teo parabolic comp}
Let $A$ satisfy (A1)--(A2) and $H$ as above. Let $U$ an open subset of $\R^d$ and let $v\in\D{USC}([0,T]\times\overline U)$ and $w\in\D{LSC}([0,T]\times\overline U)$ be, respectively, a sub and a supersolution of
\begin{equation}\label{eq parabolic comp}
{\partial_t u}-\D{tr}(A(x)D_x^2u)+H(x, D_x u)=0\quad \hbox{in $ (0,T)\times U$,}
\end{equation}
satisfying 
\begin{equation}\label{app hyp 2}
\limsup_{\substack{|x|\to +\infty\\ x\in U}}\ \sup_{t\in [0,T]}\frac{v(t,x)}{1+|x|}\leqslant 0 \leqslant \liminf_{\substack{|x|\to +\infty\\ x\in U}}\ \inf_{t\in [0,T]}\frac{w(t,x)}{1+|x|}.
\end{equation}
Then \quad   
\[
v(t,x)-w(t,x)\leqslant \sup_{\partial_P\left(\TUcyl\right)}\big(v-w\big)\qquad\hbox{for every  $(t,x)\in (0,T)\times U$,} 
\]
where $\partial_P\left(\TUcyl\right):=\{0\}\times U\cup [0,T)\times\partial U$ is the parabolic boundary of $\TUcyl$. 
\end{teorema}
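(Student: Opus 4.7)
The plan is to extend the comparison argument of \cite[Theorem~2.3]{AT} by exploiting a basic asymmetry of the inf--of--convex structure. Since $H(x,p)=\inf_{i\in\I} H_i(x,p)\le H_i(x,p)$ pointwise, every viscosity supersolution $w$ of \eqref{eq parabolic comp} is automatically a supersolution of the same equation with $H$ replaced by any single $H_i$: the supersolution inequality at a subtangent only becomes stronger if we add the nonnegative quantity $H_i-H\ge 0$. For the subsolution, such a global replacement is not possible, but it \emph{is} possible at a single test point: at every $\phi\in\D{C}^2$ giving a local maximum of $v-\phi$ at $(t_0,x_0)\in (0,T)\times U$ and every $\delta>0$, one can choose $i=i_\delta\in\I$ with
\[
H_{i_\delta}\bigl(x_0,D\phi(t_0,x_0)\bigr)\le H\bigl(x_0,D\phi(t_0,x_0)\bigr)+\delta,
\]
so that $v$ satisfies an \emph{almost--subsolution} inequality for the convex Hamiltonian $H_{i_\delta}$ modulo $\delta$. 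The proof then reduces to carrying out the comparison between an exact supersolution and an almost--subsolution of a convex problem, with structural constants uniform in $i$.

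\textbf{Reduction to a bounded set.} First I would exploit the sublinear growth \eqref{app hyp 2} together with a small time penalty to produce a strict subsolution achieving its supremum on a compact subset of $\cTUcyl$. Concretely, for $\eta,\sigma>0$ set $v_{\eta,\sigma}(t,x):=v(t,x)-\eta(1+|x|^2)^{1/2}-\sigma/(T-t)$; the spatial penalty forces the sup of $v_{\eta,\sigma}-w$ to be attained in a bounded region (by \eqref{app hyp 2}), while the time penalty provides a strictly negative contribution to $\partial_t v_{\eta,\sigma}$. Arguing by contradiction, assume $\sup(v-w)>\sup_{\partial_P}(v-w)$; then for $\eta,\sigma$ small enough the sup of $v_{\eta,\sigma}-w$ is positive, attained at an interior point, and the limits $\eta\to0$, $\sigma\to0$ are taken at the very end.

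\textbf{Doubling of variables and use of convexity.} Once localized, I would apply the standard doubling of variables, testing with
\[
\Phi(t,x,y)=v_{\eta,\sigma}(t,x)-w(t,y)-\frac{|x-y|^{q}}{\alpha}
\]
for an exponent $q>\max\{m,2\}$ and $\alpha\to 0^+$. At the maximum $(t_\alpha,x_\alpha,y_\alpha)$, the parabolic Crandall--Ishii Lemma produces matrices $X_\alpha,Y_\alpha$ with the usual second--order bounds together with a common gradient $p_\alpha=\alpha^{-1}q|x_\alpha-y_\alpha|^{q-2}(x_\alpha-y_\alpha)$. The paragraph above gives an index $i=i(\alpha,\delta)$ for which $v_{\eta,\sigma}$ satisfies the $H_i$ subsolution inequality modulo $\delta$ at $(t_\alpha,x_\alpha)$, while $w$ is an exact $H_i$ supersolution at $(t_\alpha,y_\alpha)$. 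Subtracting the two inequalities gives the key estimate
\[
\sigma/(T-t_\alpha)^2+H_i(x_\alpha,p_\alpha)-H_i(y_\alpha,p_\alpha)\le\mathrm{tr}(A(x_\alpha)X_\alpha)-\mathrm{tr}(A(y_\alpha)Y_\alpha)+\delta+o_\eta(1),
\]
and the remainder of the proof consists in showing that the right-hand side is dominated by the left-hand one for $\alpha,\delta,\eta$ small, which yields a contradiction. The standard estimates $|\mathrm{tr}(A(x_\alpha)X_\alpha)-\mathrm{tr}(A(y_\alpha)Y_\alpha)|\lesssim \Lambda_A^2\alpha^{-1}|x_\alpha-y_\alpha|^{q}$ take care of the diffusion terms.

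\textbf{Main obstacle.} The real difficulty is controlling $H_i(x_\alpha,p_\alpha)-H_i(y_\alpha,p_\alpha)$. The bound \eqref{h1.2} yields only $(\Lambda|p_\alpha|^{m_i}+M_r)|x_\alpha-y_\alpha|$, and since $|p_\alpha|$ is a priori unbounded as $\alpha\to 0$, this naive estimate is insufficient. This is precisely where the convexity of $H_i$ in $p$ enters, exactly as in \cite[Section~2]{AT}: via a sup-convolution / rescaling trick one replaces $v_{\eta,\sigma}$ by a tilted strict subsolution $(1-\lambda)v_{\eta,\sigma}+\lambda\psi$ (with $\psi$ a suitable smooth function and $\lambda\in(0,1)$ small), exploits Jensen's inequality for the convex $H_i$ to control $H_i(x,p)-H_i(y,p)$ by $\lambda$-losses and a manageable recession term, and finally closes the estimate before sending $\lambda\to 0$. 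The two crucial uniformity features of our setting allow the argument to go through unchanged: (i) the constants $\Lambda,(a_r),(M_r)$ are independent of $i$, so the resulting quantitative bounds do not deteriorate when the index $i_\delta$ is selected; and (ii) the variable exponent $m_i$ enters only through $|p|^{m_i}+1$, which, since $p_\alpha$ is bounded at each fixed stage of the argument, is uniformly controlled by the fixed bound coming from the $H\in\Ham$ side. Finally passing $\delta\to 0$ along the selection and then $\alpha,\eta,\sigma\to 0$ contradicts the assumption made at the start, proving the theorem.
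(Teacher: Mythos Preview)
Your overall strategy---select an index $i_\delta$ at the subjet point so as to reduce to a single convex $H_{i_\delta}$, and then run the comparison argument of \cite{AT} for that fixed convex Hamiltonian---contains a genuine gap: it requires $x$--regularity of $H_{i_\delta}$ that is \emph{not} assumed. By definition of the class $\mathfrak B$, each $H_i$ is merely Borel--measurable in $(x,p)$ and satisfies (H3) and \eqref{h1.1}; condition \eqref{h1.2} is \emph{only} assumed for the envelope $H$, not for the individual $H_i$ (the paper stresses this point explicitly). After your subtraction you are left with the term $H_{i_\delta}(x_\alpha,p_\alpha)-H_{i_\delta}(y_\alpha,p_\alpha)$, which you propose to bound via \eqref{h1.2}; this step is simply not available. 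Convexity of $H_{i_\delta}$ in $p$ gives you nothing about its oscillation in $x$, so the ``convexity trick'' you invoke cannot rescue this term. The related claim that ``$p_\alpha$ is bounded at each fixed stage'' is also unjustified: $|p_\alpha|$ is a priori unbounded as $\alpha\to 0$, which is precisely why a coercive gain is needed.

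The paper's argument circumvents this by a different decomposition. It multiplies the subsolution by a parameter $s=1-4\eta<1$ in the doubling function $\Phi(t,x,y)=s\,v(t,x)-w(t,y)-\frac{|x-y|^2}{2\eps}-\eta\phi(x)$, and then splits
\[
sH\Bigl(x_\eps,\frac{p_\eps+q_\eps}{s}\Bigr)-H(y_\eps,p_\eps)
=\underbrace{\Bigl[sH\Bigl(x_\eps,\frac{p_\eps+q_\eps}{s}\Bigr)-H(x_\eps,p_\eps)\Bigr]}_{I_1}
+\underbrace{\bigl[H(x_\eps,p_\eps)-H(y_\eps,p_\eps)\bigr]}_{I_2}.
\]
The $x$--variation $I_2$ is bounded via \eqref{h1.2} for $H$ itself (legitimate), giving $-(\Lambda|p_\eps|^{m}+M_{R_\eta})|x_\eps-y_\eps|$. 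For $I_1$ one uses $H=\inf_i H_i$ to write $I_1\ge \inf_i\bigl[sH_i(x_\eps,\frac{p_\eps+q_\eps}{s})-H_i(x_\eps,p_\eps)\bigr]$; this involves each $H_i$ only at the \emph{single} space point $x_\eps$, so no $x$--regularity of $H_i$ is needed. Convexity of $H_i(x_\eps,\cdot)$ together with the uniform coercivity \eqref{h1.1} then yields a positive term $C_\eta(1-s)|p_\eps|^{m_i}$, which absorbs the bad $|p_\eps|^{m_i}|x_\eps-y_\eps|$ coming from $I_2$ once $\eps$ is small (Lemma~\ref{lemma AT}). The moral is: use \eqref{h1.2} only for $H$, and invoke the $H_i$'s convexity only at a fixed $x$. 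Your index--selection scheme mixes these two roles and therefore needs an assumption you do not have.
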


For the proof of Theorem \ref{teo parabolic comp}, we will use in a crucial way the following estimate, that we will prove separately:

\begin{lemma}\label{lemma AT}
Let $H$ be as above. For fixed $\eta\in (0,1/8)$ and $R_\eta>1$, let $x_\eps,y_\eps,q_\eps\in\R^d$  such that $|x_\eps|,\,|y_\eps|\leqslant R_{\eta}-1$, $|q_\eps|\leqslant \eta$ for every $\eps\in (0,1)$, and
\begin{equation}\label{cond AT}
\lim_{\eps\to 0^+}{|x_\eps-y_\eps|}=0.
\end{equation}
Then there exist $\eps(\eta)>0$, $C>0$ and a constant $C_\eta>0$, depending on $\eta$, such that, for every $\eps<\eps(\eta)$ and for $s=1-4\eta$ we have
\begin{equation}\label{claim AT}
sH\left(x_\eps, \frac{p_\eps+q_\eps}{s}\right)-H\left(y_\eps, p_\eps\right)\geqslant -C(1-s)-C_\eta|x_\eps-y_\eps|,
\end{equation}
where $p_\eps:=(x_\eps-y_\eps)/\eps$.
\end{lemma}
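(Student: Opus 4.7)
The plan is to split
\[
sH\Bigl(x_\eps,\frac{p_\eps+q_\eps}{s}\Bigr)-H(y_\eps,p_\eps)=\Bigl[sH\Bigl(x_\eps,\frac{p_\eps+q_\eps}{s}\Bigr)-H(x_\eps,p_\eps)\Bigr]+\bigl[H(x_\eps,p_\eps)-H(y_\eps,p_\eps)\bigr]
\]
and to estimate each bracket with a different tool: the convexity in $p$ of the individual $H_i$ for the first, and the $x$--modulus \eqref{h1.2} of $H$ itself for the second. The algebraic key of the first step is that the constraints $\eta<1/8$ and $|q_\eps|\leqslant\eta$ force $|{-q_\eps/(1-s)}|=|q_\eps|/(4\eta)\leqslant 1/4$, so that
\[
p_\eps=s\cdot\frac{p_\eps+q_\eps}{s}+(1-s)\cdot\Bigl(-\frac{q_\eps}{1-s}\Bigr)
\]
exhibits $p_\eps$ as a convex combination whose second point stays in $\overline B_{1/4}$.

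For the first bracket I fix $i\in\I$ and apply the convexity of $H_i(x_\eps,\cdot)$ to this identity; combined with the a~priori bound $H_i(x_\eps,-q_\eps/(1-s))\leqslant\Lambda\bigl((1/4)^{m_i}+1\bigr)\leqslant 2\Lambda$ coming from the upper inequality in \eqref{h1.1}, this yields
\[
sH_i\Bigl(x_\eps,\frac{p_\eps+q_\eps}{s}\Bigr)\geqslant H_i(x_\eps,p_\eps)-2\Lambda(1-s).
\]
Since $H(x_\eps,p_\eps)\leqslant H_i(x_\eps,p_\eps)$ for every $i$, passing to the infimum over $i\in\I$ on the left transfers the bound to $H$:
\[
sH\Bigl(x_\eps,\frac{p_\eps+q_\eps}{s}\Bigr)\geqslant H(x_\eps,p_\eps)-2\Lambda(1-s).
\]

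For the second bracket, I invoke \eqref{h1.2} for $H\in\Ham$ at radius $r=R_\eta$, which is legitimate since $|x_\eps|,|y_\eps|\leqslant R_\eta-1$, to obtain
\[
H(x_\eps,p_\eps)-H(y_\eps,p_\eps)\geqslant-\bigl(\Lambda|p_\eps|^m+M_{R_\eta}\bigr)|x_\eps-y_\eps|.
\]
Combining the two displayed estimates gives
\[
sH\Bigl(x_\eps,\frac{p_\eps+q_\eps}{s}\Bigr)-H(y_\eps,p_\eps)\geqslant-2\Lambda(1-s)-\bigl(\Lambda|p_\eps|^m+M_{R_\eta}\bigr)|x_\eps-y_\eps|,
\]
which is already of the form \eqref{claim AT} with the universal constant $C:=2\Lambda$.

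The hard part, clearly, is to absorb the prefactor $\Lambda|p_\eps|^m+M_{R_\eta}$ into a single constant $C_\eta$ depending only on $\eta$: a priori $p_\eps=(x_\eps-y_\eps)/\eps$ need not stay bounded merely from $|x_\eps-y_\eps|\to 0$. This is where the freedom in the threshold $\eps(\eta)$ enters. Using the convergence $|x_\eps-y_\eps|\to 0$, one shrinks $\eps(\eta)$ so that for $\eps<\eps(\eta)$ the quantity $|p_\eps|$ is controlled by a function of $\eta$ and $R_\eta$ alone (in the intended application of the lemma, inside the doubling--of--variables argument for Theorem \ref{teo parabolic comp}, this control is delivered by the viscosity sub-- and super--solution inequalities together with the coercivity encoded in (H3)--(H4), which forces such a bound on the dual variable). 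Once such a bound is available the factor $\Lambda|p_\eps|^m+M_{R_\eta}$ is majorised by a constant $C_\eta$ depending only on $\eta$, and \eqref{claim AT} follows.
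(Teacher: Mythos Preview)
Your convexity step is correct and, taken on its own, is a slicker route than the paper's to the inequality
\[
sH\Bigl(x_\eps,\frac{p_\eps+q_\eps}{s}\Bigr)\geqslant H(x_\eps,p_\eps)-2\Lambda(1-s).
\]
The gap is in the last paragraph. You end with
\[
sH\Bigl(x_\eps,\frac{p_\eps+q_\eps}{s}\Bigr)-H(y_\eps,p_\eps)\geqslant-2\Lambda(1-s)-\bigl(\Lambda|p_\eps|^m+M_{R_\eta}\bigr)|x_\eps-y_\eps|
\]
and then need $\Lambda|p_\eps|^m+M_{R_\eta}\leqslant C_\eta$. You argue this by appealing to bounds on $|p_\eps|$ that would come from the viscosity inequalities in the proof of Theorem~\ref{teo parabolic comp}. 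But no such bound is part of the hypotheses of the lemma; and, more importantly, none is available in that application either. In Theorem~\ref{teo parabolic comp} the sub-- and supersolutions $v,w$ are merely semicontinuous, so $p_\eps+q_\eps\in D_x^+v(t_\eps,x_\eps)$ and $p_\eps\in D_x^-w(t_\eps,y_\eps)$ carry no a~priori size bound. (The situation you have in mind is Proposition~\ref{prop Lip comp}, where one of the two is assumed Lipschitz---but the whole purpose of the present lemma is precisely to dispense with that assumption.)

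The paper closes this gap by extracting more from the convexity step: instead of your clean bound, it carries out a two--stage convexity argument (via an intermediate point $r=(1+s)/2$ and a small parameter $\gamma$) that uses the \emph{coercivity} lower bound $H_i\geqslant a_{R_\eta}|p|^{m_i}-M_{R_\eta}$ to produce
\[
sH_i\Bigl(x_\eps,\frac{p_\eps+q_\eps}{s}\Bigr)-H_i(x_\eps,p_\eps)\geqslant C_\eta(1-s)\,|p_\eps|^{m_i}-2\Lambda(1-s),
\]
i.e.\ a \emph{positive} term proportional to $(1-s)|p_\eps|^{m_i}$. This positive term is then paired against the $-\Lambda|p_\eps|^{m_i}|x_\eps-y_\eps|$ coming from \eqref{h1.2}; since $1-s=4\eta$ is fixed while $|x_\eps-y_\eps|\to0$, the combined coefficient of $|p_\eps|^{m_i}$ becomes nonnegative for $\eps<\eps(\eta)$, and one simply drops it. That is how the proof avoids any bound on $|p_\eps|$. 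Your simpler convexity identity discards exactly this gain, which is why you are forced to import an external bound that is not there.
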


\begin{proof}
The proof relies on the arguments used in \cite{AT}, up to some technical modifications that we detail below. 
Let us denote by $I$ the left--hand side term of \eqref{claim AT}. We have  
\begin{align*}\label{claim 1b equivalent}
I=\underbrace{ \left( sH\big(x_\eps, \frac{p_\eps+q_\eps}{s}\big)-H\left(x_\eps, p_\eps\right) \right)}_{I_1}
+
\underbrace{  \Big(H\left(x_\eps, p_\eps\right)-H\left(y_\eps, p_\eps\right) \Big)}_{I_2}.
\end{align*}
By the fact that $H$ satisfies hypothesis \eqref{h1.2} and $m\leqslant m_i$, we get 
\begin{equation}\label{estimate I2}
I_2\geqslant -\Lambda\left(|p_\eps|^m+M_{R_\eta}\right)|x_\eps-y_\eps|\geqslant  -\Lambda\left(|p_\eps|^{m_i}+2M_{R_\eta}\right)|x_\eps-y_\eps|
\quad
\hbox{for all $i\in\I$}.
\end{equation}
As for the term $I_1$, we obviously have $I_1\geqslant \inf_{i\in\I} J_i$ with  
\[
J_i:=sH_i\big(x_\eps, \frac{p_\eps+q_\eps}{s}\big)-H_i\left(x_\eps, p_\eps\right). 
\]
Let us estimate $J_i$, for each fixed $i$. 
Set $r:=(1+s)/2<1$. We exploit the convexity of $H_i$ in $p$: by arguing as in \cite{AT}, we get
\[
J_i\geqslant { \left( \frac{s}{r} H_i\left(x_\eps,\frac{r}{s}p_\eps\right)-H_i(x_\eps,p_\eps)\right)}-\frac{\Lambda}{2}(1-s)\left( 1+\frac{(4\eta)^{m_i}}{(1-s)^{m_i}} \right).
\]
Using the fact that $1-s=4\eta$, this inequality can be restated as 
\begin{equation}\label{ineq I_1} 
J_i\geqslant  \underbrace{ \left( \frac{s}{r} H_i\left(x_\eps,\frac{r}{s}p_\eps\right)-H_i(x_\eps,p_\eps)\right)}_{G_\eps}-\Lambda(1-s).
\end{equation}
We proceed to estimate $G_\eps$: by arguing as in \cite{AT} we get   
\[
G_\eps\geqslant \frac{1-s}{2}\left({\left(\gamma\, a_{R_\eta}- \Lambda\gamma^{m_i}\right)}|p_\eps|^{m_i}-\gamma M_{R_\eta}-\Lambda \right),
\] 
where $\gamma$ is any fixed parameter in $(0,1/2)$. Notice that, in view of the fact that $m_i\geqslant m$, we have 
$\gamma^{m_{i}}\leqslant \gamma^{m}$. We conclude that we can choose $\gamma$ sufficiently small in such a way that the term in front of $|p_\eps|^{m_i}$ can be estimated from below by $2C_\eta$, where $C_\eta$ is a positive constant only depending on $\eta$ (and, in particular, independent of $i$), and the term $\gamma M_{R_\eta}$ can be estimated from  above by $\Lambda$.  We get 
\begin{equation*}\label{ineq G}
J_i\geqslant G_\eps-\Lambda(1-s)\geqslant C_\eta(1-s)|p_\eps|^{m_i}-2\Lambda(1-s).
\end{equation*}
By taking into account \eqref{estimate I2}, we infer 
\begin{equation}\label{ineq final}
J_i+I_2\geqslant 
\big(C_\eta(1-s)-\Lambda|x_\eps-y_\eps| \big)|p_\eps|^{m_i}-2\Lambda(1-s)-2M_{R_\eta}|x_\eps-y_\eps|.
\end{equation}
Since $|x_\eps-y_\eps|\to 0$ as $\eps\to 0^+$ and $1-s=4\eta$, we infer that,  for every fixed $\eta>0$, we can find $\eps(\eta)>0$, independent of $i$, such that, for every $0<\eps<\eps(\eta)$, the term in front of $|p_\eps|^{m_i}$ is positive. By discarding it from the right--hand side of \eqref{ineq final} and by recalling that 
$I_1+I_2\geqslant \inf_{i\in\I} J_i+I_2$, we finally obtain \eqref{claim AT} with $C:=2\Lambda$ and $C_\eta:=2M_{R_\eta}$.
\end{proof}

With the aid of Lemma \ref{lemma AT}, the proof of Theorem \ref{teo parabolic comp} can be carried on by reasoning as in \cite{AT}. For the reader's convenience, we give it in the Appendix. We will furnish more details than in \cite{AT} and also correct  a misleading misprint therein contained, see \eqref{comp parabolic subsol}.\smallskip

We end this subsection by providing a slight generalization of Theorem \ref{teo parabolic comp}.

\begin{prop}\label{prop trivial}
Let $H\in\Ham$ and assume there exists $\rho>0$ such that 
\begin{equation}\label{def H bis}
H(x,p)=\inf_{i\in\I} H_{i}(x,p),\qquad\hbox{for all $(x,p)\in\R^d\times(\R^d\setminus B_\rho)$,}
\end{equation}
where $\I$ is a set of indexes and each $H_{i}$ belongs to $\Bami$, with exponent $m_i>1$ possibly depending on $i$.  
Then, for such a $H$, the statement of Theorem \ref{teo parabolic comp} holds. 
\end{prop}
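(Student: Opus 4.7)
The plan is to reduce Proposition \ref{prop trivial} to Theorem \ref{teo parabolic comp} by showing that the estimate of Lemma \ref{lemma AT} continues to hold under the weaker assumption $H=\inf_i H_i$ on $\R^d\times(\R^d\setminus B_\rho)$. Since the representation $H=\inf_i H_i$ enters the proof of Theorem \ref{teo parabolic comp} only through Lemma \ref{lemma AT}, once the latter is re-established in this setting the Appendix argument then carries over word for word.

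Inspecting the proof of Lemma \ref{lemma AT}, the infimum representation is invoked only at the two points $(x_\eps, p_\eps)$ and $(x_\eps,(p_\eps+q_\eps)/s)$ in order to obtain $I_1\geq \inf_{i\in\I} J_i$ (the bound on $I_2$ relies on \eqref{h1.2} for $H$ itself and is unaffected). I would split the analysis according to the size of $p_\eps$. \emph{Case A:} $|p_\eps|>\rho+1$. Since $|q_\eps|\leq\eta<1/8$ and $s\geq 1/2$, one has $|(p_\eps+q_\eps)/s|\geq (|p_\eps|-\eta)/s > \rho$, so both relevant arguments lie in $\R^d\times(\R^d\setminus B_\rho)$ and the original argument of \cite{AT} reproduced in Lemma \ref{lemma AT} applies verbatim. \emph{Case B:} $|p_\eps|\leq \rho+1$. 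Here one bypasses the infimum representation altogether and estimates directly via the decomposition
\[
sH(x_\eps,q') - H(y_\eps,p_\eps) = (s-1)H(x_\eps,q') + \bigl[H(x_\eps,q')-H(x_\eps,p_\eps)\bigr] + \bigl[H(x_\eps,p_\eps)-H(y_\eps,p_\eps)\bigr],
\]
where $q':=(p_\eps+q_\eps)/s$ and $|q'|$ is bounded in terms of $\rho$ alone. The upper bound in \eqref{h1.1} applied to the first term, the local Lipschitz estimate (H3) applied to the second together with $|q'-p_\eps|\leq ((1-s)|p_\eps|+|q_\eps|)/s\leq C(\rho)(1-s)$, and \eqref{h1.2} applied to the third yield respectively lower bounds of the form $-C_1(\rho)(1-s)$, $-C_2(\rho)(1-s)$, and $-(\Lambda(\rho+1)^m+M_{R_\eta})|x_\eps-y_\eps|$. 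Collecting the three contributions produces \eqref{claim AT} with $C:=C_1(\rho)+C_2(\rho)$ independent of $\eta$ and $C_\eta:=\Lambda(\rho+1)^m+M_{R_\eta}$.

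The delicate point, which I expect to be the main obstacle, is the $\eta$-independence of the coefficient of $1-s$ in Case B. This is essential because the limiting scheme in the proof of Theorem \ref{teo parabolic comp} sends $\eps\to 0$ first for fixed $\eta$ and only afterwards $\eta\to 0$, and any $\eta$-dependence (such as a factor $M_{R_\eta}$) multiplying $1-s=4\eta$ would spoil the final contradiction. The saving observation is that $|q'|$ and $|p_\eps|$ are controlled solely by the fixed $\rho$, so both the upper bound in \eqref{h1.1} and the local $p$-Lipschitz constant from (H3) can be taken independently of $R_\eta$; the $R_\eta$-dependent constant from \eqref{h1.2} enters only as a multiplier of $|x_\eps-y_\eps|$, which vanishes as $\eps\to 0$.
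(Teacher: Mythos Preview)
Your proposal is correct and follows essentially the same strategy as the paper: both reduce the matter to re-establishing the estimate of Lemma \ref{lemma AT} and split into the cases $|p_\eps|\le\rho+1$ (direct estimate via \eqref{h1.1}, (H3), \eqref{h1.2}, exactly as you do in Case B) and $|p_\eps|>\rho+1$ (use the infimum representation, since then $(p_\eps+q_\eps)/s$ also lies outside $B_\rho$). The only packaging difference is in the latter case: the paper introduces the auxiliary Hamiltonian $\tilde H(x,p):=\max\{H(x,p),\mu(x)\}$ with $\mu(x):=\inf_{|p|\ge\rho}H(x,p)$ and invokes Lemma \ref{lemma AT} for $\tilde H$, whereas you observe directly that the proof of Lemma \ref{lemma AT} uses the identity $H=\inf_i H_i$ only at the two momenta $p_\eps$ and $(p_\eps+q_\eps)/s$, both of which lie outside $B_\rho$. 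Your route is slightly more transparent and avoids having to verify separately that $\tilde H$ and the $\max\{H_i,\mu\}$ satisfy the structural hypotheses; the paper's route has the aesthetic advantage of literally reapplying the lemma rather than reopening its proof. Your remark about the $\eta$-independence of the coefficient of $1-s$ in Case B is exactly the point the paper relies on as well.
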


\begin{proof}
Looking at the proof of Theorem \ref{teo parabolic comp}, it is clear that it suffices to prove that, for every fixed $\eta\in (0,1/8)$, there is an infinitesimal sequence $(\eps_k)_k$ such that $H$ satisfies \eqref{claim AT} in Lemma \ref{lemma AT} for every $\eps\in\{\eps_k\,:\,k\in\N\}$.  Therefore, let us fix $\eta\in (0,1/8)$, set $s:=1-4\eta$, and let $x_\eps,y_\eps,p_\eps, q_\eps$ as in the statement of that lemma. Then there exists an infinitesimal sequence $(\eps_k)_k$ such that either $|p_{\eps_k}|\leqslant \rho+1$ for all $k\in\N$, or 
$|p_{\eps_k}|> \rho+1$ for all $k\in\N$. Let $\eps=\eps_k$ with $k\in\N$. We follow the notation used in the proof of Lemma \ref{lemma AT}.  

\noindent In the first case, first notice that $|(p_\eps+q_\eps)/s|<2\rho+3$. From (H4) and (H3) we  
get 
\[
I_1
\geqslant 
-(1-s)\Lambda \big(1+(2\rho+3)^m\big)-\omega\left(\big|\frac{p_\eps+q_\eps}{s}-p_\eps\big|\right),
\]
where $\omega$ is a continuity modulus of $H(x_\eps,\cdot)$ in $B_{2\rho+3}$. In view of (H3) and of the relation $s=1-4\eta$, we infer that 
there exists a constant $C$, only depending on $m,\Lambda$ and $\rho$, such that 
\[
I_1\geqslant -C\,(1-s).
\]
As for $I_2$, from \eqref{estimate I2} we infer 
\[
 I_2\geqslant -\Lambda\left((\rho+1)^m+M_{R_\eta}\right)|x_\eps-y_\eps|.
\]

\noindent In the second case, notice that $|(p_\eps+q_\eps)/s|>\rho$. 
We set $\tilde H(x,p):=\max\{H(x,p),\mu(x)\}$ for every $(x,p)\in\R^d\times\R^d$, with $\mu(x):=\inf_{|p|\geqslant\rho} H(x,p)$. 
Now remark that such $\tilde H$ belongs to $\Hamtilde$ for suitable constants $\tilde\Lambda>0$, $(\tilde a_r)_{r>0}$ in $(0,1]$, $(\tilde M_r)_{r>0}$ in $[1,+\infty)$, and it  
can be written as in \eqref{def H} with $\max\{H_i(x,p),\mu(x)\}$ in place of $H_i$, for each $i\in\I$. We can therefore apply Lemma \ref{lemma AT} to $\tilde H$ and conclude that $H$ satisfies \eqref{claim AT} since $H=\tilde H$ on $\R^d\times\big(\R^d\setminus B_\rho\big)$, by definition of $\tilde H$. The proof is complete. 
\end{proof}

\subsection{Examples}\label{subsection examples}
In this subsection we give a couple of examples of Hamiltonians in the class $\Ham$ that can be written in the form \eqref{def H} for some functions $H_i\in\Bami$. 

\begin{esempio}\label{ex 1}
Let $H\in\Ham$ be of the form 
$$
H(x,p):=K(x,p)+G(x,p),\qquad (x,p)\in\R^d\times\R^d,
$$ 
where $K$ is a  convex Hamiltonian belonging to $\Ham$, while
\[
 G(x,p):=\inf_{i\in\I} \left\{\langle g_i(x),p\rangle +f_i(x)\right\},\qquad (x,p)\in\R^d\times\R^d,
\]
where the functions $g_i:\R^d\to\R^d$ and $f_i:\R^d\to\R$ are Borel measurable and equibounded. Then $H=\inf_{i\in\I} H_i$, where the functions  
$$H_i(x,p):=K(x,p)+\langle g_i(x),p\rangle +f_i(x)$$ 
belong to $\Bamtilde$, for suitable constants $\tilde\Lambda>0$, $(\tilde a_r)_{r>0}$ in $(0,1]$, $(\tilde M_r)_{r>0}$ in $[1,+\infty)$.
\end{esempio}

Our second example consists in considering $H\in\Ham$ such to satisfy a 
 semiconcavity--type condition in $p$ inside a compact set of momenta and a convexity condition in $p$  in the complement. This example is, of course, already covered by Proposition \ref{prop trivial}. We include it nevertheless to show why it is useful to drop continuity with respect to $x$ for the approximating functions $H_i$  and to allow the associated exponents to possibly depend on the index.
%
%As a matter of fact, this will be the object of a proposition. 
We remark that it is natural to expect some kind of semi--concavity property in $p$ for a Hamiltonian of the form \eqref{def H}. Indeed, the fact that the $H_i$ are convex in $p$ and are trapped between two paraboloids, according to condition \eqref{h1.1} in (H4), should entail, loosely speaking, a form of equi--semiconcavity in $p$ for the approximating functions $H_i$, locally with respect to $x$. 

\begin{esempio}\label{ex 2}
Let $H\in\Ham$ and assume there exist $\rho>0$ and $K\in\Baml$ for some $\ell\geqslant m$ such that 
\begin{itemize}
 \item[(a)] $H(x,\cdot)-K(x,\cdot)$ is concave in $B_\rho$ for every $x\in\R^d$;\smallskip
 \item[(b)]  the function $\max\{H(x,p),\mu(x)\}$ is convex in $p$ for every fixed $x\in\R^d$, with $\mu(x):=\inf_{|p|\geqslant\rho} H(x,p)$.\smallskip
\end{itemize}
Then there exists a family of functions $H_i\in\Bami$ such that $H$ can be written in the form \eqref{def H}. 

Let us prove the assertion. We first remark that $\mu$ is locally bounded on $\R^d$ since 
\[
-M_r\leqslant \mu(x)\leqslant \min_{|p|=\rho} H(x,p)\leqslant \Lambda(|\rho|^m+1)\quad\hbox{for all $x\in B_r$ and $r>0$,}
\]
in view of the fact that $H$ satisfies \eqref{h1.1}. 
% Moreover, still in view of \eqref{h1.1}, for every $r>0$ there exists a radius $\rho(r)\geqslant \rho$ such that \ $\ell(x)=\min_{\rho\leqslant |p|\leqslant R} H(x,p)$\ for every $x\in B_r$. As an infimum of a family of locally equi--Lipschitz functions, $\mu$ is itself locally Lipschitz.
This readily implies that the function \ $H_\flat(x,p):=\max\{H(x,p),\mu(x)\}$\ belongs to $\Bam$, for possibly different constants $\Lambda$, $(a_r)_{r>0}$, $(M_r)_{r>0}$. Note that $H(x,p)\geqslant \mu(x)$ for every $x\in\R^d$ and $|p|\geqslant\rho$, by definition of $\mu(x)$, in particular
\begin{equation}\label{relation Hflat}
H_\flat(x,p)=H(x,p)\qquad\hbox{for $x\in\R^d$ and $|p|\geqslant\rho$.} 
\end{equation}
By assumption, the function $F(x,p):=H(x,p)-K(x,p)$ is concave in $B_\rho$ with respect to $p$, for every fixed $x\in\R^d$, and Borel--measurable with respect to $(x,p)$. By well known result of convex analysis, for every fixed $q\in B_\rho$ we know that  
\begin{equation}\label{def superdifferential}
F(x,p)\leqslant \langle\xi,p-q\rangle +F(x,q)\qquad\hbox{for all $p\in B_\rho$,} 
\end{equation}
with equality holding at $p=q$, where $\xi$ is any vector in the superdifferential $\partial^+_p F(x,q)$, in the sense of convex analysis, of $F(x,\cdot)$ at $q$. By the measurable selection Theorem, see \cite[Theorem III.30]{Val}, we infer that there exists a Borel--measurable map $\xi:\R^d\times B_\rho\to\R^d$ such that $\xi(x,q)\in \partial^+_p F(x,q)$ for every $(x,q)\in \R^d\times  B_\rho$. Since the function $F$ satisfies condition (H3), it is easily seen that there exists a constant $C>0$ such that \ $|\xi(x,q)|\leqslant C$ for every $(x,q)\in\R^d\times B_\rho$.  
We introduce the set of indexes $\I:=B_\rho$ and for every $q\in\I$  we set 
\begin{align*}
G_{q}(x,p):=K(x,p)+\langle \xi(x,q),p-q\rangle +F(x,q)\qquad\hbox{for all  $(x,p)\in\R^d\times\R^d$}.
\end{align*}
Notice that, in view of \eqref{def superdifferential} and of the continuity of $H$ and $G_q$ in $p$, we have
\begin{equation}\label{property G_q}
H(x,p)\leqslant G_q(x,p)\qquad\hbox{for all $(x,p)\in \R^d\times\overline B_\rho$,}
\end{equation}
with equality holding at $p=q$. For every $q\in\I$ we set  
\begin{align*}
H_{q}(x,p):=
\begin{cases}
G_{q}(x,p) & \hbox{if $|p|\leqslant\rho$,}\\ 
\max\{ G_{q}(x,p), H_\flat(x,p)\} & \hbox{if $|p|>\rho$.}
\end{cases}
\end{align*}
Note that, for every fixed $x\in\R^d$, the function $H_q(x,\cdot)$ thus defined is continuous since $G_q(x,p)\geqslant H(x,p)=H_\flat(x,p)$  for every $|p|=\rho$, in view of \eqref{property G_q} and \eqref{relation Hflat}.
Furthermore, by construction, 
\begin{equation}\label{properties H_q}
H\leqslant H_q\quad\hbox{in $\R^d\times\R^d$\quad for each $q\in\I$,}\qquad
H=\inf_{q\in\I} H_q\quad\hbox{in $\R^d\times\overline B_\rho$,}
\end{equation}
An easy check shows that each $H_q$ satisfies \eqref{h1.1} and (H3) with $\ell$ in place of $m$ and for suitable constants $\tilde \Lambda$, $(\tilde a_r)_{r>0}$, $(\tilde M_r)_{r>0}$, independent of $q\in \I$. We claim that  $H_q(x,\cdot)$ is convex on $\R^d$, for every fixed $x\in\R^d$ and $q\in\I$. To prove this, we will show that the function $H_q(x,\cdot)$ possesses a subdifferential at each point $p_0\in\R^d$. If $p_0$ is such that $H_q(x,p_0)=G_q(x,p_0)$, it suffices to take a subdifferential of the convex function $G_q(x,\cdot)$ at $p_0$. Let us then assume $G_q(x,p_0)<H_q(x,p_0)=H_\flat(x,p_0)$, implying in particular that $|p_0|>\rho$. Let $\eta$ be a subdifferential of the convex function $H_\flat(x,\cdot)$ at $p_0$, i.e. 
\[
f(p):=H_\flat(x,p_0)+\langle \eta,p-p_0\rangle\leqslant H_\flat(x,p)\qquad\hbox{for every $p\in\R^d$.}
\]
To prove that $\eta$ is a subdifferential of $H_q(x,\cdot)$ on $\R^d$, it suffices to show that the function  $\varphi(p):=G_q(x,p)- f(p)$ is nonnegative on $\overline B_\rho$. But this is clearly true since $\varphi\geqslant 0$ on $\partial B_\rho$, $\varphi(p_0)<0$ and $\varphi$ is convex on $\R^d$. We conclude that $H_q\in\Bamltilde$ for every $q\in\I$.

\indent The asserted representation formula for $H$ is finally obtained by remarking that
\[
 H(x,p)=\inf_{q\in\I\cup\{\flat\}} H_q(x,p)\qquad\hbox{for all $(x,p)\in\R^d\times\R^d$}
\]
in view of \eqref{relation Hflat} and \eqref{properties H_q}. 
\end{esempio}

It would be very interesting, in Example \ref{ex 1}, to take as $G$ a concave function of $p$ of more general form, for instance such that 
$-G\in\Haml$ for some $\ell<m$; or to allow $\rho=+\infty$ in Example \ref{ex 2}, which is  basically an equivalent fact. Such an extension seems 
out of reach with the techniques we have employed. We remark that an analogous question was raised in \cite[Remark 2.1]{daLioLey08}. 

\medskip

\begin{appendix}
\section{}\label{appendix}

In this section we give a proof of Theorem \ref{teo parabolic comp} and Proposition \ref{prop Lip comp}.\\

\noindent{\em Proof of Theorem \ref{teo parabolic comp}.} 
We assume $\sup_{\partial_P\left(\TUcyl\right)}\big(v-w\big)<+\infty$, being the statement otherwise trivial. 
We set $\phi(x):=\sqrt{1+|x|^2}$ and remark that, due to hypothesis \eqref{app hyp 2}, the linear growth of $\phi$ at infinity and the upper semicontinuity of $v$ and $-w$, for every $\varrho>0$ there exists $\mu_\varrho>0$ such that 
\begin{equation}\label{parabolic ineq v}
v(t,x)\leqslant \varrho\phi(x)+\mu_\varrho,\quad -w(t,x)\leqslant \varrho\phi(x)+\mu_\varrho\qquad\hbox{for all $(t,x)\in\TUcyl$.}
\end{equation}
Fix $b>0$ and first observe that $\tilde w:=w+b/(T-t)$ satisfies 
\begin{equation}\label{parabolic supersol}
{\partial_t \tilde w}-\D{tr}(A(x)D_x^2 \tilde w)+H(x, D_x \tilde w)\geqslant \frac{b}{T^2}=: c\quad\hbox{in $\TUcyl$}. 
\end{equation}
Clearly, it is enough to prove the assertion for $v$ and $\tilde w$ for any fixed $b>0$. We will thus prove the comparison principle under the additional assumption that $w$ solves \eqref{parabolic supersol} and that, for every $\varrho>0$, there exists $\mu_\varrho>0$ such that 
\begin{equation}\label{parabolic ineq u}
-w(t,x)\leqslant \varrho\phi(x)+\mu_\varrho-\frac{b}{T-t}\qquad\hbox{for all $(t,x)\in\TUcyl$.}
\end{equation}
Moreover, up to adding to $v$ a suitable constant, we will also assume, without any loss of generality, that $\sup_{\partial_P\left(\TUcyl\right)}\big(v-w\big)=0$. The assertion is thus reduced to proving that $v\leqslant w$ in $\TUcyl$. We argue by contradiction: suppose that $v>w$ at some point of $(0,T)\times U $, which, up to translations, we can assume to be of the form  $(\overline t,0)$ for some $\overline t\in (0,T)$, and set $\theta:=v(\overline t, 0)-w(\overline t,0)>0$. 
Fix $\eta\in (0,\theta/4)$, $s\in (0,1)$ and $\eps\in (0,1)$, and consider the auxiliary function $\Phi:\cTUcyl\times \overline U \to \R$ defined by
\[
\Phi(t,x,y):=s v(t,x)-w(t,y)-\frac{|x-y|^2}{2\eps}-\eta\phi(x),\quad\hbox{$(t,x,y)\in\cTUcyl\times\overline U$.}
\]
Choose $s_0\in (1/2,1)$ sufficiently close to 1 so that 
\[
 \Phi(\overline t,0,0)=sv(\overline t,0)-w(\overline t,0)-\eta\phi(0)> \frac{\theta}{2}\quad\hbox{for all $\eta\in(0,\theta/4)$ and  $s\in (s_0,1)$.}
\]
By using \eqref{parabolic ineq v} and \eqref{parabolic ineq u}, a tedious but standard computation shows that there exists $(t_\eps,x_\eps,y_\eps)\in[0,T]\times\overline U \times\overline U $ such that 
\begin{equation}\label{parabolic raddoppio}
\Phi(t_\eps,x_\eps,y_\eps)=\sup_{\TUcyl\times U } \Phi \geqslant \Phi(\overline t,0,0)> \frac{\theta}{2}.
\end{equation}
By \cite[Lemma 3.1]{users}, up to subsequences,  
\begin{equation}\label{parabolic limits approximating points}
\lim_{\eps\to 0} (t_\eps,x_\eps,y_\eps)=(t_0,x_0,x_0)\quad\hbox{and}\quad\lim_{\eps\to 0}\frac{|x_\eps-y_\eps|^2}{\eps}=0
\end{equation}
for some $(t_0,x_0)\in\cTUcyl$ satisfying 
\begin{equation}\label{parabolic maximum point}
sv(t_0,x_0)-w(t_0,x_0)-\eta\phi(x_0)=\sup_{(t,x)\in\TUcyl} \Phi(t,x,x)> \frac{\theta}{2}.
\end{equation}
By exploiting inequalities \eqref{parabolic ineq v} and \eqref{parabolic ineq u}  with $\varrho:=\eta/4$ in \eqref{parabolic maximum point}, we easily get that any point $(t_0,x_0)\in\cTUcyl$ enjoying \eqref{parabolic maximum point} satisfies 
$$\eta\phi(x_0)+\frac{2b}{T-t_0}\leqslant 4\mu_{\eta/4}-\theta.$$
We infer that there exist a constant $R_\eta>1$, only depending on $\eta>0$, and a constant $T_{b,\eta}\in (0,T)$, depending on $b>0$ and $\eta>0$, such that 
$|x_0|\leqslant R_\eta-1$ and $t_0\leqslant T_{b,\eta}$. Furthermore, any such point $(t_0,x_0)$ actually lies in $\TUcyl$ provided
\begin{equation}\label{parabolic eq 1-s small}
(1-s)\leqslant \min\left\{4\eta, \frac{\theta}{2 \mu_{1/4}}\right\},
\end{equation}
where $\mu_{1/4}$ is the positive constant appearing in \eqref{parabolic ineq v} and \eqref{parabolic ineq u} with $\varrho=1/4$. Indeed, if $(t_0,x_0)\in \partial_P\left(\TUcyl\right)$, by exploiting the parabolic boundary condition $v\leqslant w$ on $\partial_P\left(\TUcyl\right)$, we get
\[
 \frac\theta 2<(1-s)\big(-w(t_0,x_0)\big)-\eta\phi(x_0)
 \leqslant
 \left(\frac{1-s}{4}-\eta\right)\phi(x_0)+(1-s)\mu_{1/4},
 \]
which is never satisfied as soon as $s$ is chosen as in \eqref{parabolic eq 1-s small}.

Let us hereafter choose $s=1-4\eta$ and $\displaystyle 0<\eta<\min\left\{1/8,{\theta}/{4},{\theta}/{(8\mu_{1/4})}\right\}$, so that $(t_0,x_0)\in\TUcyl$. In particular, $(t_\eps,x_\eps,y_\eps) \in \TUcyl\times\R^d$ for sufficiently small $\eps>0$. 
Now we use \eqref{parabolic raddoppio}, the fact that $v$ is a subsolution of \eqref{eq parabolic comp} and $w$ is a supersolution of \eqref{parabolic supersol}, and  \cite[Theorem 8.3]{users} to infer that there exist $\tau_\eps\in\R$ and symmetric $d\times d$ matrices $X_\eps,Y_\eps$ satisfying 
\begin{eqnarray*}
  -\frac{3}{\eps}
  \begin{pmatrix}
     I_d & 0 \\
     0 & I_d
  \end{pmatrix}
  \leqslant
  \begin{pmatrix}
     X_\eps & \,0 \\
     0 & -Y_\eps
  \end{pmatrix}
  \leqslant
  \frac{3}{\eps}
  \begin{pmatrix}
     \ I_d & \ -I_d \\
     -I_d & I_d
  \end{pmatrix}
\end{eqnarray*}
such that 
\begin{equation}\label{comp parabolic subsol}
\tau_\eps-\D{tr}\left(A(x_\eps)\Big(X_\eps+\eta D\phi(x_\eps)\Big)\right)+sH\left(x_\eps, \frac{p_\eps+q_\eps}{s}\right)\leqslant 0,
\end{equation}
\begin{equation}\label{comp parabolic supersol}
\tau_\eps-\D{tr}\left(A(y_\eps)Y_\eps\right)+H\left(y_\eps, p_\eps\right)\geqslant c,
\end{equation}
where we have set 
\[
p_\eps:=\frac{x_\eps-y_\eps}{\eps}\quad\hbox{and}\quad q_\eps:=\eta D\phi(x_\eps).
\]
As usual, the idea is to derive a contradiction by showing that the difference between \eqref{comp parabolic supersol} and \eqref{comp parabolic subsol} must be negative, after sending first $\eps\to 0^+$ and then $\eta\to 0^+$ (and consequently $s=1-4\eta\to 1^-$). 

To estimate difference between the terms involving $A$ in \eqref{comp parabolic supersol} and \eqref{comp parabolic subsol}, we argue as in the proof of Theorem 2.1 in \cite{AT} to get 
\begin{equation}\label{app estimate 3}
\D{tr}\left(A(x_\eps)\Big(X_\eps+\eta D^2\psi(x_\eps)\Big)-A(y_\eps)Y_\eps\right)\leqslant \tilde C\left(\frac{|x_\eps-y_\eps|^2}{\eps}+\eta\right)
\end{equation}
for some constant $\tilde C>0$ independent of $\eps$ and $\eta$. 
Therefore, by 
subtracting \eqref{comp parabolic subsol} from \eqref{comp parabolic supersol} and by taking into account \eqref{app estimate 3} and Lemma \ref{lemma AT}, we end up with
\begin{equation}\label{app estimate 4}
 0<c\leqslant \tilde C\left(\frac{|x_\eps-y_\eps|^2}{\eps}+\eta\right)+C(1-s)+C_\eta|x_\eps-y_\eps|.
\end{equation}
Now we send $\eps\to 0^+$ and then $\eta\to 0^+$ (and consequently $s=1-4\eta\to 1^-$) in \eqref{app estimate 4} and we obtain the sought contradiction, in view of \eqref{parabolic limits approximating points}.\medskip
\qed

We now proceed to give a proof of Proposition \ref{prop Lip comp}, that, as we will see, is derived via a minor modification from the one just presented. 
In what follows, we will denote by $D_x^+ v(t_\eps,x_\eps)$ the set of {\em superdifferentials} of the function $v(t_\eps,\cdot)$ at the point $x_\eps$, 
%i.e. the set of vectors $D_x\phi(x_\eps)$ with $\phi$ a supertangent  to $v(t_\eps,\cdot)$ at the point $x_\eps$. Analogously, we will denote by 
and by $D_x^- w(t_\eps,y_\eps)$ the set of {\em subdifferentials} of the function $w(t_\eps,\cdot)$ at the point $y_\eps$.\\ %namely the set defined by $D_x^- w(t_\eps,y_\eps):=-D_x^+(-w)(t_\eps,y_\eps)$.\\

\noindent{\bf Proof of Proposition \ref{prop Lip comp}.}
We argue as in the proof of Theorem \ref{teo parabolic comp} choosing now $s=1$. 
The only difference consists in the estimate of the term  
$
H\left(x_\eps, p_\eps+q_\eps \right)-H\left(y_\eps, p_\eps\right).
$
Notice that $p_\eps+q_\eps\in D^+_x v(t_\eps,x_\eps)$ and $p_\eps\in D^-_x w(t_\eps,y_\eps)$. From the fact that either 
$\|D_x v\|_{L^\infty(\TUcyl)}$ or $\|D_x w\|_{L^\infty(\TUcyl)}$ is finite, let us say less than a positive constant $\kappa$, and  that 
$|q_\eps|<\eta$, we infer  
\[
|p_\eps+q_\eps|\leqslant \kappa+\eta,\qquad |p_\eps|\leqslant \kappa+\eta.  
\]
Let us choose $\eta<1$ and let $\omega$ be a continuity modulus of $H$ in $U\times B_{\kappa+1}$. We have
\begin{equation}\label{app estimate 1}
|H\left(x_\eps, p_\eps+q_\eps \right)-H\left(y_\eps, p_\eps\right)|\leqslant \omega\left(|x_\eps-y_\eps|+\eta \right).
\end{equation}
The assertion follows by arguing as in the proof of Theorem \ref{teo parabolic comp} and by using \eqref{app estimate 1} in place of the inequality \eqref{claim AT} stated in Lemma \ref{lemma AT}.
\qed\\
\end{appendix}

\bibliography{viscousHJ}
\bibliographystyle{siam}

\end{document}